%
%
%
%
%
%
\def\ArXivS{1}
%
%
%
%
\ifx\ArXivS\undefined
\documentclass[authoryear,hyperref,preprint,12p]{elsarticle}
\ifx\StProb\undefined
\makeatletter%
\def\ps@pprintTitle{%
     \let\@oddhead\@empty
     \let\@evenhead\@empty
     \def\@oddfoot{}%
     \let\@evenfoot\@oddfoot}
\makeatother%
\fi
%
%
%
\else
\documentclass[10p,a4paper]{article}
\textwidth=440pt  \textheight=700pt \hoffset=-45pt \voffset=-50pt \tolerance=500
\emergencystretch=20pt \frenchspacing
\newcommand{\keywords}[1]{\gdef\and{; }\begin{description}
\item[Keywords]{#1}
\end{description}}
\newcommand{\subclass}[1]{\gdef\and{; }\begin{description}
\item[Subclass]{#1}
\end{description}}
\newlength{\ei}\ei=0.0138888889mm
\newcommand{\qed}{\nopagebreak\hspace*{\fill}\mbox{\scriptsize\rm%
  {/}\hspace{-13\ei}{/}\hspace{-13\ei}{/}\hspace{-13\ei}{/}}}   
\fi

\newenvironment{proof}[1]
{\begin{trivlist}\item[]
 \ifthenelse{\equal{#1}{ }}%
   {{\sc Proof:}}%
   {{\sc Proof\/ #1\/:}}%
}
{\hfill  \qed\end{trivlist}}%

\usepackage{graphicx}

\usepackage{amsmath}
\usepackage{subfigure}
\usepackage{amssymb}
\usepackage{mathptmx}      

\RequirePackage{natbib}
\RequirePackage{hypernat}
\usepackage{url}
\newcommand{\href}[2]{{#2}}
\RequirePackage{ifthen}
\usepackage{color}
\definecolor{red}{RGB}{255,0,0}
\newcommand{\NewInRev}[1]{{{#1}}}
%

 
\newcommand{\Ss}{\scriptstyle}  \newcommand{\SSs}{\scriptscriptstyle}


\newcommand{\rfia}[1]{\makebox[\parindent][l]{%
                     \makebox[0em][r]{\rm(}\sf#1\rm)}}
\newcounter{ABCcB}
\newcommand{\theABCcC}{\alph{ABCcB}}

\newcommand{\Ew}{\mathop{\rm {{}E{}}}\nolimits} 

\newcommand{\Lo}{\mathop{\rm {{}o{}}}\nolimits}

\newcommand{\tr}{\mathop{\rm{} tr{}}}
\newcommand{\B}{\mathbb B}

\newcommand{\R}{\mathbb R}

\newcommand{\N}{\mathbb N}
\newcommand{\Jc}{\mathop{\bf\rm{{}I{}}}\nolimits}

\newcommand{\sR}{{\Ss \mathbb R}}


\newcommand{\Tfrac}[2]{\textstyle\frac{#1}{#2}}

   
\newlength{\SyW}   \newlength{\msu}  \msu=\mathsurround 

\newtheorem{Thm}{Theorem}
\newtheorem{Prop}[Thm]{Proposition}
\newtheorem{Lem}[Thm]{Lemma}
\newtheorem{Rem}[Thm]{Remark}

\newtheorem{Def}[Thm]{Definition}

\newtheorem{Exam}[Thm]{Example}

\numberwithin{equation}{section}
\numberwithin{Thm}{section}

\newcounter{ABCc}
\renewcommand{\theABCc}{\alph{ABCc}}

\newcommand{\Tpi}{{\SSs\;\textrm{T}_{\SSs\pi}}}
\newcommand{\dpi}{{\,{\cdot_{\SSs\pi}}\,}}
\newcommand{\Trsp}{{\SSs\;\textrm{T}}}

%
\begin{document}

\ifx\ArXivS\undefined%

\begin{frontmatter}
\author[hfu,tukl]{Daria Pupashenko}
{\author[itwm]{Peter Ruckdeschel\corref{cor1}}
\ead{peter.ruckdeschel@itwm.fraunhofer.de}
\cortext[cor1]{Corresponding author}}
\author[hfu]{Matthias Kohl}
\address[hfu]{Furtwangen University, Department of Medical and Life Sciences,
  			  Jakob-Kienzle-Str. 17, 78054 VS-Schwenningen, Germany}
\address[tukl]{TU Kaiserslautern, AG Statistik, FB.\ Mathematik,
         P.O.Box 3049, 67653 Kaiserslautern, Germany}
\address[itwm]{Fraunhofer ITWM, Abt.\ Finanzmathematik,
         Fraunhofer-Platz 1, 67663 Kaiserslautern, Germany}
\date{\today}

\title{$L_{2}$ Differentiability of Generalized Linear Models}


\begin{abstract}
We derive conditions for $L_2$ differentiability of generalized linear models
with error distributions not necessarily belonging to exponential families,
covering both cases of stochastic and deterministic regressors. These conditions
induce smoothness and integrability conditions for corresponding GLM-based
time series models.
\begin{keyword}
{Generalized linear models\sep
$L_2$-differentiability\sep shape scale model\sep time series model for shape}
\MSC{62F12,62F35}
\end{keyword}
\end{abstract}
\end{frontmatter}
%
%


\else
\title{$L_{2}$ Differentiability of Generalized Linear Models}
\author{Daria Pupashenko\footnotemark\;\,\footnotemark\;,
        Peter Ruckdeschel\footnotemark\;, and
        Matthias Kohl${}^{\ast}$}
\date{\today}
\maketitle%
\renewcommand{\thefootnote}{\fnsymbol{footnote}}
\footnotetext[1]{Furtwangen University, Dept.\ of Medical and Life Sciences,
							Jakob-Kienzle-Str. 17, 78054 VS-Schwenningen, Germany}%
\footnotetext[2]{Dept.\ of Mathematics, University of Kaiserslautern,
              P.O.Box 3049, 67653 Kaiserslautern, Germany}%
\footnotetext[3]{Fraunhofer ITWM, Dept.\ of Financial Mathematics,
              Fraunhofer-Platz 1, 67663 Kaiserslautern, Germany\\
              corresponding address: \texttt{peter.ruckdeschel@itwm.fraunhofer.de}}%
%
\begin{abstract}
We derive conditions for $L_2$ differentiability of generalized linear models
with error distributions not necessarily belonging to exponential families,
covering both cases of stochastic and deterministic regressors. These conditions
induce smoothness and integrability conditions for corresponding GLM-based
time series models.
\keywords{Generalized linear models\and
$L_2$-differentiability\and shape scale model\and time series model for shape
}%
\subclass{MSC 62F12, 62F35}
\end{abstract}
\fi

\section{Motivation}\label{setup}
%
%
Introduced by \citet{N:W:72}, generalized linear models (GLMs) have become one of
the most frequently used statistical models with a vast amount of published
results. Hence, trying to give a full account on relevant literature would be
pretentious. We instead refer to the monographs \citet{MC:Ne:89} and \citet{Fa:Tu:01}.
When it comes to regularity assumptions, though, this literature focuses on
GLMs which are exponential families, compare \citet{Hab:74,Hab:77,Fah:90,Fa:Ka:85},
or uses quasi-likelihood or pseudo-likelihood techniques to account for
over/under-dispersion effects, see\ifx\ArXivS\undefined%
, a.o.,
\fi\citet{G:M:T:84,Ne:Pr:87,MC:Ne:89}.
In some situations, exponential families are a too narrow class, though: E.g., recently
log-linear models for generalized Pareto distributions have found applications in
operational risk (compare \citet{D:G:10}), but distributions of extreme value type with unknown
shape parameter do not fall into the range of exponential families and so far are not
yet covered.

Heading for asymptotic results and robustness, we are not only interested in consistency
results for specific estimators like maximum likelihood estimators (MLEs), but rather in
\textit{local asymptotic normality (LAN)} in the sense of \citet{LC:70,Haj:72}.
\NewInRev{With the LAN property at hand a very powerful asymptotic framework as pioneered by Le\!~Cam
is available: It gives a precise setup in which to obtain strong optimality results
for (estimators behaving asymptotically like) the MLE, i.e., the Asymptotic Convolution
Theorem and the Asymptotic Minimax
Theorem, see, e.g.\ \citet[Thms.~3.2.3 \& 3.3.8]{Ri:1994} or \citet[Thms.~8.8 \& 8.11]{vdV:1998}.
The LAN property entails necessary expansions for asymptotic maximin tests with explicit terms for the
asymptotic maximin power under local alternatives \citep[Sec.11.9]{LC:86}; it is the
starting point for efficient and adaptive estimation in semiparametric models (compare
\citet{BKRW:1993}) and for a comprehensive theory of optimally-robust procedures
(see \citet[Chs.~5 \& 7]{Ri:1994}).}

\NewInRev{Now, a sufficient condition for the LAN property is given by $L_2$-differentiability
(see, e.g.\ \citet[Thm.~2.3.5]{Ri:1994}), and---at least in the i.i.d.\ setting---this
is a necessary condition, too, compare \citet[Ch.~7, Prop.~3]{LC:Y:00}. Hence in this light,}
deriving smoothness of the model in terms of $L_2$-differentiability would be a desirable
goal; i.e., to consider GLMs as particular parametric models and to derive their
$L_2$-differentiability.
For GLMs which are exponential families, this has already been achieved in \citet{Schl:1994}.
Typically, however, scale-shape families as e.g.\ the generalized Pareto distributions
are non-exponential. In this article, we hence generalize results of \citet[Sec.~2.4]{Ri:1994}
on $L_2$-differentiability for linear regression models to also cover error distributions
with a $k$-dimensional parameter and with regressors of possibly different length for each parameter.
More specifically, we separately treat the case of stochastic regressors, which is of particular
interest for incorporating (space-)time dependence, and of deterministic regressors as occurring
in planned experiments.

While in principle $L_2$-differentiability of these models could be settled by
general auxiliary results from \citet[Lem.~A.1--A.3]{Haj:72}, or be placed
in the framework of \citet{Ri:Ru:2001}, our goal are sufficient conditions
directly exploiting the regression structure. More specifically, these conditions refer to
(i) smoothness of the error distribution model, (ii) (uniform) integrability of the scores
($L_2$-derivative) and (iii) suitably integrated continuity of the Fisher information of
again the error distribution model.

At first glance, this might look like a technical exercise but setting up time series models
where time-dependence is captured by a GLM-type link with (functions of) the own past observations
as regressors,  conditions~(ii) and~(iii) reveal to which extent the current error distribution
may depend upon the past without making it ``over-informative'' for the present.
\NewInRev{More precisely, letting aside dimensionalities of the parameter of the error distribution
and the regressors, the scores function of a GLM ${\cal P}$ with errors from a distribution model
${\cal Q}$, link function $\ell$ and regressor $x$ is of form
$\Lambda^{\cal P}_\beta(x,y)=\Lambda_{\ell(x\beta)}^{\cal Q}(y) \dot\ell(x\beta) x$, where
$\Lambda^{\cal Q}_\vartheta$ are the parametric scores from model ${\cal Q}$. Now even
if ${\cal Q}$ has fat tails and non-existing moments, in many cases $\Lambda^{\cal Q}_\vartheta$
still is square integrable, see e.g.\ the case of $\alpha$-stable distributions as in \citet{DuMouchel1973}
or the generalized extreme value and Pareto distributions GEVD and GPD
explicated later on in this paper. If however, as in
a autoregressive (AR) time series context with identity link $\ell(\theta)=\theta$, $x$
comes again from a distribution within ${\cal Q}$, the LAN property may fail due to
a lack of integrability. This is the case in \citet[Thm.~3.3]{ACD:2009}, where in addition
the authors obtain slower convergence rates for $\beta$ in an AR-model with $\alpha$-stable errors.
One way to preserve the LAN property could consist in using a suitable link function
$\ell$ such that the product $\dot \ell x$ becomes square integrable---see later
in this paper for corresponding GPD and GEVD time series. This technique can be seen as
an alternative / an extension to the approach using regression ranks as in \citet{HSVV:2011},
which in the respective case of a regression model with $\alpha$-stable errors and deterministic
regressors achieves the same goal, i.e.,  extending the availabilty of the LAN property.
}

\NewInRev{In this paper, we explicate } the respective conditions (i)--(iii) for the cases of
stochastic and deterministic regressors,
respectively, in examples including---for reference and com\-parison---linear regression,
Poisson, and Binomial regression, as well as scale-shape regression for the GPD and GEVD.

In particular for the latter distributions we give conditions which render a corresponding
time series model accessible to the LAN type framework and thus contribute a new sort of GLM for
extreme value type distributions where the tail weight respectively, the shape parameter depends
on past observations in an autoregressive way. Thus, large extreme observations may foster or
dampen the occurrence of future large extreme observations and controlling the extremal index
(see \citet[p.413--423]{E:C:M:97}) this way.

The rest of the paper is organized as follows: Section~\ref{Mainsec} provides the mathematical
setup and the main results with Theorem~\ref{Thmmain} (for random carriers) and Theorem~\ref{ThmmainDet}
(for deterministic carriers). The examples are worked out in Section~\ref{examples}.
The proofs of our assertions are given in the appendix.

\section{Main Results}\label{Mainsec}
%
Let $(\Omega, {\cal A})$ be a measurable space and ${\cal M}_{1}({\cal A})$ the set of all
probability measures on~${\cal A}$. Consider
$\mathcal{Q}=\{Q_{\vartheta}|\vartheta\in \Theta\}\subset {\cal M}_{1}({\cal A})$ a parametric
model with open parameter domain $\Theta\subset \R^{k}$. Following Le\!~Cam and Rieder, we write
$dQ_{\vartheta}$ for the densities w.r.t.\ some dominating measure $\nu$ on ${\cal A}$ and denote
the norm in the respective $L_2(\nu)$ space by $\|\,\cdot\|_{\mathcal{L}_2}$; as usual, $\nu$
is suppressed from notation as the choice of $\nu$ has no effect on respective convergence assertions.
%
In this context,  $L_{2}$ differentiability in the case of i.i.d.\ observations is defined as follows.
\begin{Def}
\label{def}
Model $\mathcal{Q}$ is called $L_{2}$ differentiable at $\vartheta\in\Theta$ if there exists a function
$\Lambda^{\SSs\mathcal{Q}}_{\vartheta} \in L_{2}^{k}(P_{\vartheta})$ such that, as $h\rightarrow 0 \in\R^k$
\begin{equation} \label{L2dk}
\left\|\sqrt{dQ_{\vartheta+h}}-\sqrt{dQ_{\vartheta}}\big(1+\tfrac{1}{2}(\Lambda^{\SSs\mathcal{Q}}_{\vartheta})^\Trsp h\big)\right\|_{\mathcal{L}_{2}}=\Lo(|h|).
\end{equation}
Then, $\Lambda^{\SSs\mathcal{Q}}_{\vartheta}$ is the $L_{2}$ derivative and the $k\times k$ matrix
$\mathcal{I}^{\SSs\mathcal{Q}}_{\vartheta}=\Ew_{\vartheta}\Lambda^{\SSs\mathcal{Q}}_{\vartheta}(\Lambda^{\SSs\mathcal{Q}}_{\vartheta})^\Trsp $
is the Fisher information of $\mathcal{Q}$ at $\vartheta$.\smallskip\\
We say that $\mathcal{Q}$ is continuously $L_{2}$ differentiable at $\vartheta$ if, for any $h\to 0 \in\R^k$,
\begin{equation} \label{L2dc}
\sup_{t\in\sR^k\colon\,|t|\leq 1}\left\|\sqrt{dQ_{\vartheta+h}} (\Lambda^{\SSs\mathcal{Q}}_{\vartheta+h})^\Trsp t -
\sqrt{dQ_{\vartheta}} (\Lambda^{\SSs\mathcal{Q}}_{\vartheta})^\Trsp t\right\|_{\mathcal{L}_{2}}=\Lo(1).
\end{equation}
\end{Def}


Introducing regressors to explain parameter $\vartheta$, we turn model $\mathcal{Q}$ into a
regression model $\mathcal{P}$ with parameter $\beta$.
To this end, for $p\in \N$, let $\pi\in\N^k$, $\pi=(p_h)_{h=1,\ldots,k}$ be a partition of the $p$ coordinates
into blocks of dimension $p_h$, i.e., $\sum_h p_h = p$. Obviously, then each $x\in\R^p$ can unambiguously be indexed
by the double index $(x_{h,j})_{{{h=1,\ldots,k}\atop{j=1,\ldots,p_h}}}$. For these blocks we define the following
operators:
\begin{eqnarray}
&&T_\pi\colon \R^p\times\R^p \to \R^k,\quad (a,b)\mapsto T_\pi(a,b)=:a^{\Tpi}b=(\sum_{j=1}^{p_h} a_{h,j} b_{h,j})_{h=1,\ldots,k}\\
&&\rho_\pi\colon \R^k\times\R^p \to \R^p,\quad (c,a)\mapsto \rho_\pi(c,a)=:c\,\dpi\,a=(c_h a_{h,j})_{{{h=1,\ldots,k}\atop{j=1,\ldots,p_h}}}\\
&&M_\pi\colon \R^{k\times k}\times\R^p \times\R^p \to \R^{p\times p},\quad (C,a,b)\mapsto M_\pi(C,a,b)=\nonumber\\
&&\mbox{\hspace{6cm}}(C_{h_1,h_2} a_{h_1,j_1} b_{h_2,j_2})_{{{h_1,h_2=1,\ldots,k}\atop{j_1,j_2=1,\ldots,p_h}}}.
\end{eqnarray}
We also write $C\dpi a$ for a $k\times m$ matrix $C$, meaning that we
apply $\rho_\pi$ to $C$ column by column as first argument, so that the result will be
the respective $p\times m$ matrix $(c_{h,l} a_{h,j})_{{{h=1,\ldots,k}\atop{j=1,\ldots,p_h}};l=1,\ldots,m}$.
\medskip\\
Then, the case of a $k$-dimensional parameter $\vartheta$ in Model~$\mathcal{Q}$ and non-identically dimensional
regressors for each of the $k$ coordinates can be captured using a continuously differentiable link
function $\ell\colon\R^k\to\Theta$ with derivative $\dot\ell$, so that for a $p$-dimensional regressor $X$
and $p$-dimensional regression parameter $\beta$ we obtain a regression as $\vartheta=\ell(\theta)$ for
$\theta = X^{\Tpi} \beta$. Applying the chain rule, the candidate $L_2$ derivative in this regression model
is
\begin{equation}\label{LambdaPdef}
\Lambda_\beta^{\SSs\mathcal{P}}(x,y)=\dot \ell(\theta)^\Trsp  \Lambda_\vartheta^{\SSs\mathcal{Q}}(y) \,\dpi\,x\,.
\end{equation}
The case of the linear regression model treated in \citet[Sec.~2.4]{Ri:1994} is obtained as a special case
for $\mathcal{Q}$ an $L_2$-differentiable $k=1$-dimensional location model and $\ell$ the identity.
As in \citet[Sec.~2.4]{Ri:1994}, we distinguish the cases of stochastic and deterministic regressors.

\smallskip
To apply conditions as in \citet{Haj:72}, we need the notion of \textit{absolute continuity} in
$k$ dimensions: Let $f\colon\R^k\to\R$; we call $f$ \textit{absolutely continuous}, if for all
$a,b\in\R^k$  the function $G\colon[0,1]\to\R$,
$s\mapsto G(s)=f(a+s(b-a))$ is \textit{absolutely continuous} (as usual, see \citet[chap.~6]{Ru:1986})%
\label{ACmult}.\medskip

For later reference we recall the results of \citet[Lem.~A.1--A.3]{Haj:72}:
\begin{Prop}[H\'ajek]\label{Hajekprop}
Assume that in some ${\vartheta_0}\in\Theta$ surrounded by
some open neighborhood $U$, model $\mathcal Q$ satisfies
\begin{enumerate}
\item[(H.1)] The densities $dQ_\vartheta(y)$ are absolutely
continuous in each $\vartheta\in U$  for $Q_{\vartheta_0}$-a.e.\ $y$.
\item[(H.2)] The derivative $\frac{\partial}{\partial \vartheta} dQ_\vartheta(y) =
\Lambda_\vartheta(y)\,dQ_\vartheta(y)$ exists in each $\vartheta\in U$
for $Q_{\vartheta_0}$-a.e.\ $y$.
\item[(H.3)] The Fisher information $\mathcal{I}_{\vartheta}=\int\, \Lambda_{\vartheta}(y) \Lambda_{\vartheta}(y)^\Trsp\,Q_\vartheta(dy)$ exists, (i.e., the integral is finite) and is continuous in $\vartheta$ on $U$.
\end{enumerate}
Then, $\mathcal Q$ is continuously $L_2$ differentiable in $\vartheta_0$ with derivative
$\Lambda_{\vartheta_0}$ and Fisher information $\mathcal{I}_{\vartheta_0}$.
\end{Prop}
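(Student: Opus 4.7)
The plan is to reduce both assertions \eqref{L2dk} and \eqref{L2dc} to a single auxiliary claim: the map $\vartheta\mapsto\Lambda^{\SSs\mathcal{Q}}_\vartheta\sqrt{dQ_\vartheta}$ is continuous from $U$ into the vector-valued space $L_2^k(\nu)$. Granted this continuity, a classical H\'ajek-style integral identity together with Minkowski's inequality and Cauchy--Schwarz will finish off both statements.

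To start, I would invoke (H.1)--(H.2) together with the chain rule for absolutely continuous functions (\citet[Lem.~A.1]{Haj:72}) to conclude that, for $Q_{\vartheta_0}$-a.e.\ $y$, the map $\vartheta\mapsto\sqrt{dQ_\vartheta(y)}$ is absolutely continuous along every line segment in $U$, with derivative $\tfrac{1}{2}\Lambda^{\SSs\mathcal{Q}}_\vartheta(y)\sqrt{dQ_\vartheta(y)}$ on $\{dQ_\vartheta(y)>0\}$ and zero elsewhere. Integrating this derivative along the segment from $\vartheta$ to $\vartheta+h$ and subtracting the candidate linear approximation yields
\begin{equation*}
\sqrt{dQ_{\vartheta+h}}-\sqrt{dQ_\vartheta}\bigl(1+\tfrac{1}{2}(\Lambda^{\SSs\mathcal{Q}}_\vartheta)^\Trsp h\bigr)=\tfrac{1}{2}\int_0^1\bigl[(\Lambda^{\SSs\mathcal{Q}}_{\vartheta+sh})^\Trsp h\,\sqrt{dQ_{\vartheta+sh}}-(\Lambda^{\SSs\mathcal{Q}}_\vartheta)^\Trsp h\,\sqrt{dQ_\vartheta}\bigr]\,ds.
\end{equation*}
Taking $\|\cdot\|_{\mathcal{L}_2}$-norms on both sides, Minkowski's integral inequality together with Cauchy--Schwarz in $\R^k$ bounds the right-hand side by $\tfrac{|h|}{2}\sup_{s\in[0,1]}\|\Lambda^{\SSs\mathcal{Q}}_{\vartheta+sh}\sqrt{dQ_{\vartheta+sh}}-\Lambda^{\SSs\mathcal{Q}}_\vartheta\sqrt{dQ_\vartheta}\|_{\mathcal{L}_2^k}$, which is $\Lo(|h|)$ as soon as the continuity claim is in place and thus gives \eqref{L2dk}. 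A pointwise Cauchy--Schwarz in the scalar $(\Lambda^{\SSs\mathcal{Q}}_\vartheta)^\Trsp t$ reduces the uniform-in-$t$ statement \eqref{L2dc} to the same continuity.

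For the continuity claim itself, the idea is the standard one: by (H.3) the squared norm $\|\Lambda^{\SSs\mathcal{Q}}_\vartheta\sqrt{dQ_\vartheta}\|_{\mathcal{L}_2^k}^2=\tr\mathcal{I}^{\SSs\mathcal{Q}}_\vartheta$ depends continuously on $\vartheta$; combining this norm convergence with pointwise a.e.\ convergence of $\Lambda^{\SSs\mathcal{Q}}_{\vartheta+h}\sqrt{dQ_{\vartheta+h}}$ along suitable subsequences (extracted from the absolute continuity above via a Lebesgue-type differentiation argument) and invoking a Vitali/Scheff\'e-type lemma upgrades this to strong $L_2^k$-convergence; a standard subsequence contradiction then promotes it from subsequences to the full net $h\to 0$.

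The step I expect to be the main obstacle is precisely this continuity argument. Condition (H.2) only grants the existence, not the continuity in $\vartheta$, of the scores $\Lambda^{\SSs\mathcal{Q}}_\vartheta(y)$, so the required pointwise convergence of the product $\Lambda^{\SSs\mathcal{Q}}_\vartheta(y)\sqrt{dQ_\vartheta(y)}$ must be pulled out carefully from (H.1) using the fact that this product is (up to a factor $2$) the $\vartheta$-derivative of an absolutely continuous function on a line; the continuity of the Fisher information from (H.3) is then the ingredient that pins the limit down in $L_2^k(\nu)$.
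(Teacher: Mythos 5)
The paper does not actually reprove this result: it reduces the multivariate statement to H\'ajek's Lemmas A.1--A.3 via the line-segment notion of absolute continuity and observes that the continuity assertion \eqref{L2dc} is supplied by (H.3). Your attempt at a self-contained proof is a genuinely different route, but it has a real gap at exactly the point you flag as the main obstacle. You funnel \emph{both} \eqref{L2dk} and \eqref{L2dc} through the claim that $\vartheta\mapsto\Lambda^{\SSs\mathcal{Q}}_\vartheta\sqrt{dQ_\vartheta}$ is continuous into $L_2^k(\nu)$, and you propose to prove that claim by combining norm convergence $\tr\mathcal{I}_{\vartheta+h}\to\tr\mathcal{I}_\vartheta$ with pointwise a.e.\ convergence of $\Lambda^{\SSs\mathcal{Q}}_{\vartheta+h}(y)\sqrt{dQ_{\vartheta+h}(y)}$ ``extracted from the absolute continuity via a Lebesgue-type differentiation argument''. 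That extraction is not available: (H.1)--(H.2) only make $\vartheta\mapsto dQ_\vartheta(y)$ absolutely continuous with an a.e.-defined derivative, and the derivative of an absolutely continuous function evaluated at moving points $\vartheta+h$ need not converge to its value at $\vartheta$ as $h\to0$; Lebesgue differentiation controls averages of the derivative over shrinking segments, not point evaluations $\Lambda_{\vartheta+h}(y)$. So the pointwise-plus-norm (Scheff\'e/Riesz) upgrade has nothing to feed on, and since in your scheme \eqref{L2dk} is deduced from this unproven continuity, the argument collapses at its base.

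The repair is to decouple the two assertions, as H\'ajek does. For \eqref{L2dk}, condition (H.2) \emph{at the fixed point} $\vartheta_0$ already gives pointwise a.e.\ convergence of the difference quotients $D_h(y)=\big(\sqrt{dQ_{\vartheta_0+h}(y)}-\sqrt{dQ_{\vartheta_0}(y)}\big)/|h|$ to $\tfrac{1}{2}(\Lambda_{\vartheta_0}(y))^\Trsp e\,\sqrt{dQ_{\vartheta_0}(y)}$ with $e=h/|h|$ on $\{dQ_{\vartheta_0}>0\}$ (the zero set of the density needs a separate, standard argument); your integral representation plus Fubini gives the bound $\|D_h\|^2_{\mathcal{L}_2}\le\tfrac{1}{4}\int_0^1 e^\Trsp\mathcal{I}_{\vartheta_0+uh}\,e\,du$, which converges to $\tfrac{1}{4}e^\Trsp\mathcal{I}_{\vartheta_0}e$ by (H.3), and pointwise convergence together with convergence of norms yields $L_2$-convergence. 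For \eqref{L2dc}, the continuity of $\vartheta\mapsto\Lambda_\vartheta\sqrt{dQ_\vartheta}$ is then obtained from \emph{weak} $L_2$-convergence (available once $L_2$-differentiability holds throughout $U$) combined with the norm convergence from (H.3), i.e.\ the Radon--Riesz property of Hilbert space --- not from pointwise convergence. A second, smaller gap: the square root of a nonnegative absolutely continuous function need not be absolutely continuous, so the ``chain rule'' you invoke requires the integrability of $|\partial_\vartheta dQ_\vartheta(y)|/\sqrt{dQ_\vartheta(y)}$ along the segment (supplied for a.e.\ $y$ by (H.3) and Fubini); this is precisely the content of H\'ajek's Lemma A.2 and should not be passed over.
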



\subsection{Random Carriers}\label{RC}

In this context the regressors $x$ are stochastic with distribution $K$,
but the observations $(x,y)_i$ are then modeled as i.i.d.\ observations.
To this end, let model $\mathcal{Q}$ be a $k$-dimensional $L_2$-differentiable model with
parameter $\vartheta \in \Theta$ and derivative $\Lambda_\vartheta^{\SSs\mathcal{Q}}$ and
Fisher information $\mathcal{I}_\vartheta^{\SSs\mathcal{Q}}$. The corresponding GLM induced
by the link function $\ell\colon\R^k\to\Theta$ (with derivative $\dot\ell$) and partition $\pi$
is given as
\begin{equation} \label{PstoDef}
\mathcal{P} = \Big\{\, P_{\beta}(dx,dy)=Q_{\ell(x^{\Tpi} \beta)}(dy|x)\,K(dx) \;\mid\; \beta\in\R^p;\;Q_\vartheta\in\mathcal{Q}\,\Big \}\,.
\end{equation}

We state the following result.

\begin{Thm}\label{Thmmain}
Let $\beta_0\in\R^p$ and $\vartheta_t=\ell(\theta_t)$ for $\theta_t=x^{\Tpi}(\beta_0+t)$
as well as $\dot\ell_t=\dot\ell(\theta_t)$; further define
$\mathcal{I}^{\SSs\mathcal{P}}_{\vartheta_t}(x):= M_\pi\Big(\dot\ell_t^\Trsp  \mathcal{I}^{\SSs\mathcal{Q}}_{\vartheta_t} \dot\ell_t,x,x\Big)$.\smallskip\\
Then model $\mathcal{P}$ from \eqref{PstoDef} is $L_{2}$ differentiable in $\beta_0$
if subsequent conditions~(i)--(iii) hold.
\renewcommand{\labelenumi}{(\roman{enumi})}
\begin{enumerate}
\item Model $\mathcal{Q}$ fulfills (H.1)--(H.3) with ``$Q_{\vartheta_0}$-a.e. $y$'' replaced by ``$P_{\beta_0}$-a.e. $(x,y)$''
in (H.1) and (H.2).
\item
\begin{equation} \label{FiFinite}
\vspace{-1ex}\int |\mathcal{I}^{\SSs\mathcal{P}}_{\vartheta_0}(x)|\,K(dx) < \infty,
\end{equation}
\item for every $b\in(0,\infty)$,
\begin{equation} \label{FIcont1}
\vspace{-1ex}\lim_{s\to0}\sup_{|t|\leq b}\int \,\Big|\,|\mathcal{I}^{\SSs\mathcal{P}}_{\vartheta_{st}}(x)\,\,|- |\mathcal{I}^{\SSs\mathcal{P}}_{\vartheta_0}(x)|\,\, \Big|\,K(dx) = 0,
\end{equation}
where $|\mathcal{I}|$ is the Frobenius matrix norm, i.e., $|\mathcal{I}|^2=\tr \mathcal{I}^2$.
\end{enumerate}
Then model $\mathcal{P}$  is continuously $L_{2}$ differentiable in $\beta_0$ with derivative $\Lambda^{\SSs\mathcal{P}}_{\beta_0}(x,y)=
\dot\ell_0^\Trsp \Lambda^{\SSs\mathcal{Q}}_{\vartheta_0}(y) \,\dpi\,x$ and
Fisher information
$$\mathcal{I}^{\SSs\mathcal{P}}_{\beta_0}=\Ew_{\beta_0} \Lambda^{\SSs\mathcal{P}}_{\beta_0} (\Lambda^{\SSs\mathcal{P}}_{\beta_0})^\Trsp
= \int\,\mathcal{I}^{\SSs\mathcal{P}}_{\vartheta_0}(x) K(dx)\,.
$$
\end{Thm}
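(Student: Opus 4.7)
My plan is to reduce the claim to an application of Proposition~\ref{Hajekprop} (H\'ajek) to the model $\mathcal{P}$, after translating the conditions (i)--(iii) into the three hypotheses (H.1)--(H.3) for the joint model evaluated at $\beta_0$. In particular, if I can verify that (H.1)--(H.3) hold for $\mathcal{P}$ on an open neighborhood of $\beta_0$ (with $Q_{\vartheta_0}$-a.e.\ replaced by $P_{\beta_0}$-a.e.), then H\'ajek yields continuous $L_2$-differentiability of $\mathcal{P}$ at $\beta_0$, and the derivative identified through the scores computation will match the candidate $\Lambda^{\SSs\mathcal{P}}_{\beta_0}(x,y)=\dot\ell_0^\Trsp\Lambda^{\SSs\mathcal{Q}}_{\vartheta_0}(y)\dpi x$ from \eqref{LambdaPdef}. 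The Fisher information formula then follows by Fubini.

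For (H.1) and (H.2), I would fix a $(x,y)$ in the full-measure set where condition (i) applies, and view $\beta\mapsto dQ_{\ell(x^{\Tpi}\beta)}(y)$ as the composition of the absolutely continuous map $\vartheta\mapsto dQ_{\vartheta}(y)$ with the continuously differentiable map $\beta\mapsto\ell(x^{\Tpi}\beta)$. Along any line segment in $\R^p$, the inner map is $C^1$ and hence Lipschitz on that segment, so absolute continuity is preserved; this gives (H.1) for $\mathcal{P}$. For (H.2), the chain rule applied to the representation $\partial_\vartheta dQ_\vartheta = \Lambda^{\SSs\mathcal{Q}}_\vartheta\, dQ_\vartheta$ from condition (i) produces exactly $\partial_\beta dQ_{\ell(x^{\Tpi}\beta)}(y) = [\dot\ell(\theta)^\Trsp\Lambda^{\SSs\mathcal{Q}}_\vartheta(y)\dpi x]\,dQ_{\ell(x^{\Tpi}\beta)}(y)$, identifying the candidate scores $\Lambda^{\SSs\mathcal{P}}_\beta$.

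For (H.3), I would first use Fubini to write
\[
\mathcal{I}^{\SSs\mathcal{P}}_\beta \;=\; \int\Big[\int \Lambda^{\SSs\mathcal{P}}_\beta(x,y)\Lambda^{\SSs\mathcal{P}}_\beta(x,y)^\Trsp\,Q_{\vartheta_\beta}(dy\mid x)\Big]\,K(dx) \;=\; \int \mathcal{I}^{\SSs\mathcal{P}}_{\vartheta_\beta}(x)\,K(dx),
\]
where the inner integral evaluates to $M_\pi(\dot\ell^\Trsp\mathcal{I}^{\SSs\mathcal{Q}}_{\vartheta_\beta}\dot\ell,x,x)$ by (H.3) for $\mathcal{Q}$. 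Finiteness at $\beta_0$ is then exactly condition (ii). For continuity at $\beta_0$, note that pointwise in $x$, the integrand $\mathcal{I}^{\SSs\mathcal{P}}_{\vartheta_t}(x)$ converges entry-wise to $\mathcal{I}^{\SSs\mathcal{P}}_{\vartheta_0}(x)$ as $t\to 0$, by continuity of $\dot\ell$ and of $\mathcal{I}^{\SSs\mathcal{Q}}_\cdot$ from (H.3) in (i). Condition (iii) simultaneously gives $L^1(K)$-convergence of the Frobenius norms. The main technical step is to combine these two into $L^1(K)$-convergence of the matrix-valued integrands themselves, which I would achieve by the standard Scheff\'e/Vitali-type lemma: if $f_t\to f$ pointwise and $|f_t|\to|f|$ in $L^1$, then $f_t\to f$ in $L^1$ (apply Fatou to the nonnegative sequence $|f_t|+|f|-|f_t-f|$). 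This lemma, applied entry-wise using the Frobenius norm as a dominant, yields continuity of $\beta\mapsto\mathcal{I}^{\SSs\mathcal{P}}_\beta$ on a neighborhood of $\beta_0$.

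The subtle step, and the one where condition (iii) earns its keep, is exactly this last one: the pointwise integrability in (ii) by itself is insufficient to pass matrix-continuity through the integral against $K$, because the Fisher information matrix of the GLM involves rank-one covariance terms $x\,x^\Trsp$ that can be arbitrarily large on the tails of $K$. Condition (iii) supplies the uniform integrability (in the form of $L^1$-convergence of the Frobenius norms along any direction) that closes this gap and makes the Vitali step go through. With (H.1)--(H.3) secured, Proposition~\ref{Hajekprop} completes the proof and delivers the stated derivative and Fisher information.
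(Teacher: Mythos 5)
Your route---lifting H\'ajek's conditions (H.1)--(H.3) to the joint model $\mathcal{P}$ on $(x,y)$ and then invoking Proposition~\ref{Hajekprop} for $\mathcal{P}$ as a black box---differs in presentation from the paper's proof, which never applies H\'ajek to $\mathcal{P}$ directly. The paper works conditionally on $x$: it applies the chain rule (Lemma~\ref{Thmchainrule}) pointwise in $x$ to show that the conditional squared $L_2$-remainder $\tilde A_n'(x)=|s_n|^{-2}\int\tilde A_n^2\,d\nu$ tends to $0$ for $K$-a.e.\ $x$, bounds it via the fundamental theorem of calculus by the averaged Fisher information $B_n(x)=\frac{1}{4|s_n|}\,\tilde s_n^\Trsp\int_0^{|s_n|}\mathcal{I}^{\SSs\mathcal{P}}_{\vartheta_{u\tilde s_n}}(x)\,\lambda(du)\,\tilde s_n$, and then uses \eqref{FiFinite}--\eqref{FIcont1} together with Vitali's theorem to conclude $\int\tilde A_n'\,dK\to0$. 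Your Scheff\'e step ($f_t\to f$ pointwise plus convergence of the $L^1$-norms implies $L^1$-convergence) is exactly the same uniform-integrability mechanism as the paper's Vitali argument, your Fubini identification of $\mathcal{I}^{\SSs\mathcal{P}}_{\beta_0}$ and the composition/chain-rule treatment of (H.1)/(H.2) match the paper in substance, and your diagnosis of where condition \eqref{FIcont1} earns its keep is the right one.

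There is, however, one step that does not hold as written: the claim that this procedure ``yields continuity of $\beta\mapsto\mathcal{I}^{\SSs\mathcal{P}}_\beta$ on a neighborhood of $\beta_0$.'' Conditions \eqref{FiFinite} and \eqref{FIcont1} are hypotheses about $\beta_0$ only; combined with Scheff\'e they deliver finiteness of $\mathcal{I}^{\SSs\mathcal{P}}_\beta$ for $\beta$ near $\beta_0$ and continuity at the single point $\beta_0$, but nothing excludes discontinuities of $\beta\mapsto\mathcal{I}^{\SSs\mathcal{P}}_\beta$ at points arbitrarily close to $\beta_0$. Since Proposition~\ref{Hajekprop} as stated requires continuity of the Fisher information on the whole neighborhood $U$, its hypotheses are not verified for $\mathcal{P}$, and the intended reduction fails at precisely the point it was meant to finish the proof. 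The gap is repairable: an inspection of H\'ajek's argument (equivalently, of the FTC estimate above) shows that only absolute continuity and existence of the scores on a neighborhood, finiteness of the Fisher information on a neighborhood, and its continuity at $\beta_0$ itself are actually used---but spelling this out amounts to re-running the paper's explicit chain-rule-plus-Vitali argument rather than quoting the proposition. A smaller point you should also make explicit is that absolute continuity of $\vartheta\mapsto dQ_\vartheta(y)$ along straight segments in $\Theta$ has to be transported along the \emph{curved} paths $u\mapsto\ell(x^{\Tpi}(\beta_0+us))$ traced by a nonlinear link; ``AC composed with Lipschitz'' is not automatic without monotonicity, and the clean way out is to argue via everywhere-differentiability of the composition with integrable derivative, as the FTC step requires.
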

\begin{Rem} \label{Rem26}\rm\small
Sufficient conditions for \eqref{FiFinite} and \eqref{FIcont1} are
$
\vspace{-1ex}\int |\mathcal{I}^{\SSs\mathcal{P}}_{\vartheta_0}| \,|\dot{l}_0|^{2}\,|x|^{2}\,K(dx) < \infty,
$ 
and for every $b\in(0,\infty)$,
$
\vspace{-1ex}\lim_{s\to0}\sup_{|t|\leq b}\int \,\Big |\,\,|\mathcal{I}^{\SSs\mathcal{Q}}_{\vartheta_{st}}| \,|\dot{\ell}_{st}|^2-|\mathcal{I}^{\SSs\mathcal{Q}}_{\vartheta_0}|\, |\dot{\ell}_0|^{2} \,\,\Big| \, |x|^{2}\,K(dx) = 0
$. 

\end{Rem}


%
As just seen, the general GLM case comes with additional conditions for the link function~$\ell$ and its derivative.
For the linear regression case, they boil down to (i) $L_2$ differentiability
of the one dimensional location case and (ii) finite second moment of $x$ w.r.t. $K$.
(iii) becomes void, as $\dot \ell\equiv 1$ and $\mathcal{I}^{\SSs\mathcal{Q}}$ does not depend
on the parameter---compare \citet[Thm.~2.4.7]{Ri:1994}.

\subsection{Deterministic Carriers}\label{DC}

The case of deterministic carriers canonically leads to triangular schemes of independent, but no longer
identically distributed observations. To this end, we take up \citet[Def~2.3.8]{Ri:1994} and define
a corresponding notion of $L_2$-differentiability:


For $n\in\N$ and $i=1,\ldots,i_n$, let $(\Omega_{n,i}, {\cal A}_{n,i})$ be general sample spaces and
${\cal M}_{1}({\cal A}_{n,i})$ the set of all probability measures on ${\cal A}_{n,i}$. Consider the
array of parametric families of probability measures
$\mathcal{P}_{n,i}=\{P_{n,i,\beta}|\beta\in\R^p\}\subset {\cal M}_{1}({\cal A}_{n,i})$.

\begin{Def}
\label{def1}
The parametric array $\mathcal{P}=(\bigotimes_{i=1}^{i_n}\mathcal{P}_{n,i})$ is called $L_{2}$
differentiable at ${\beta_0}\in\R^p$ if there exists an array of functions
$\Lambda^{\SSs\mathcal{P}}_{n,i,{\beta_0}} \in L_{2}^{k}(P_{n,i,{\beta_0}})$ such that for all $i=1,...,i_{n}$
and $n\geq1$ the following conditions~\eqref{cent}--\eqref{cond2} are fulfilled.
\begin{equation}
\label{cent}
\Ew_{n,i,{\beta_0}}\Lambda^{\SSs\mathcal{P}}_{n,i,{\beta_0}}=0\,.
\end{equation}
Let $\mathcal{I}^{\SSs\mathcal{P}}_{n,i,{\beta_0}} = \Ew_{n,i,{\beta_0}}\Lambda^{\SSs\mathcal{P}}_{n,i,{\beta_0}}(\Lambda^{\SSs\mathcal{P}}_{n,i,{\beta_0}})^\Trsp $
and $\mathcal{I}^{\SSs\mathcal{P}}_{n,{\beta_0}}=\sum_{i=1}^{i_n}\mathcal{I}^{\SSs\mathcal{P}}_{n,i,{\beta_0}}$ and for $t\in\R^{k}$,
we define $t_{n}=(\mathcal{I}^{\SSs\mathcal{P}}_{n,{\beta_0}})^{-\frac{1}{2}}t$ and $U_{n,i}=U_{n,i,{\beta_0}}(t)=t_n^\Trsp \Lambda^{\SSs\mathcal{P}}_{n,i,{\beta_0}}$.
Then, for all $\epsilon\in(0,\infty)$ and all $t\in\R^{k}$ we require
\begin{equation}
\label{cond1}
\lim_{n\rightarrow\infty}\sum_{i=1,...,i_{n}}\int_{\left\{|U_{n,i}|>\epsilon\right\}} U_{n,i}^2 \,\,dP_{n,i,{\beta_0}}=0\,.
\end{equation}
Finally, for all $b\in(0,\infty)$ we need
\begin{equation}
\label{cond2}
\lim_{n\rightarrow\infty}\sup_{\left|t\right|\leq b}\sum_{i=1}^{i_{n}}\left\|\sqrt{dP_{n,i,{\beta_0}+t_n}}-\sqrt{dP_{n,i,{\beta_0}}}\big(1+\frac{1}{2}U_{n,i,{\beta_0}}(t)\big)\right\|^{2}_{\mathcal{L}_{2}}=0\,.
\end{equation}
Then, in ${\beta_0}$ and at time $n$, $\mathcal{P}$ has $L_{2}$ derivative $(\Lambda^{\SSs\mathcal{P}}_{n,i,{\beta_0}})$ and
Fisher information  $\mathcal{I}^{\SSs\mathcal{P}}_{n,{\beta_0}}$.\smallskip\\%
$\mathcal{P}$ is continuously differentiable in ${\beta_0}$, if for each sequence $h_n\to0 \in\R^p$,
\begin{equation}
\label{cont2}
\lim_{n\rightarrow\infty}\sup_{\left|t\right|\leq b}\sum_{i=1}^{i_{n}}\left\|\sqrt{dP_{n,i,{\beta_0}+h_n}}
U_{n,i,{\beta_0}+h_n}(t)-\sqrt{dP_{n,i,{\beta_0}}} U_{n,i,{\beta_0}}(t)\right\|^{2}_{\mathcal{L}_{2}}=0\,.
\end{equation}
\end{Def}


Our GLM with deterministic regressors $x_{n,i}\in\R^{p}$ correspondingly is defined as
$\mathcal{P}=\bigotimes_{i=1}^{i_n}\mathcal{P}_{n,i}$
with 
\begin{equation} \label{PdetDef}
\mathcal{P}_{n,i}=
\Big\{\, P_{n,i,{\beta_0}}(dy)=Q_{\vartheta_{n,i}}(dy)\;\Big|\; {\beta_0}\in\R^p;\; \vartheta_{n,i}=\ell(x_{n,i}^{\Tpi} {\beta_0}),\; Q_{\vartheta_{n,i}}\in\mathcal{Q}\,\Big \}\,.
\end{equation}

\citet[Theorem.~2.4.2]{Ri:1994} shows that in
the linear regression case, 
 conditions \eqref{cond1} and \eqref{cond2}
follow from the (uniform) smallness of the hat matrix
$H_n=H_{n;i,j} =  x_{n,i}^\Trsp  (\sum_{g=1}^{i_n} x_{n,g} x_{n,g}^\Trsp )^{-1} x_{n,j}$,
which, as $H_n$ is a projector, reduces to the Feller type condition
\begin{equation} \label{Hmatcond}
\lim_{n} \max_{i=1,\ldots,i_n}  H_{n;i,i} = 0\,.
\end{equation}
In our more general framework, one may still define a corresponding projector $H_n$ locally
(i.e., in ${\beta_0}$) as
\begin{equation}
H_n=H_{n;i,j;{\beta_0}} =  L_{n,i;{\beta_0}}^\Trsp  (\mathcal{I}^{\SSs\mathcal{P}}_{n,{\beta_0}})^{-1}L_{n,j;{\beta_0}},
\quad L_{n,i;{\beta_0}} = \dot\ell(\theta_{n,i})^\Trsp (\mathcal{I}^{\SSs\mathcal{P}}_{n,i,{\beta_0}})^{1/2} \,\dpi\, x_{n,i}
\end{equation}
and, locally, the (changes in the) fitted parameters $\vartheta_{n,i}$ (in a corresponding Fisher scoring
procedure) then can be written as
$$
\vartheta^{\SSs \textrm{(new)}}_{n,i}= \vartheta_{n,i} + \sum_{j=1}^{i_n} (\mathcal{I}^{\SSs\mathcal{P}}_{n,i,{\beta_0}})^{-1/2} H_{n;i,j} (\mathcal{I}^{\SSs\mathcal{P}}_{n,j,{\beta_0}})^{-1/2}
\Lambda^{\SSs\mathcal{Q}}_{\vartheta_{n,j}}(y_{n,j})
.$$ 
However, contrary to the linear regression case, in the general GLM case, the distribution of
the standardized scores $(\mathcal{I}^{\SSs\mathcal{P}}_{n,j,{\beta_0}})^{-1/2}
\Lambda^{\SSs\mathcal{Q}}_{\vartheta_{n,j}}(y_{n,j})$ is not invariant in ${\beta_0}$.
Therefore, the proof for the linear regression fails at this point and condition~\eqref{Hmatcond}
is not sufficient---compare for instance the one-dimensional GLM $\mathcal{P}$ at ${\beta_0}=1$
induced by the one-dimensional Poisson model $\mathcal{Q}$ with parameter $\lambda>0$, $i_n=n$, the
identity as link function and regressors $x_{n,i}=1/n$. In fact, this is the standard
example for a scheme satisfying the Feller condition but violating the Lindeberg condition.
Also, not surprisingly, it is easy to see that Lindeberg condition~\eqref{cond1}
entails condition~\eqref{Hmatcond}.

\begin{Thm}\label{ThmmainDet}
Model $\mathcal{P}$ from \eqref{PdetDef} is continuously $L_{2}$ differentiable
in ${\beta_0}\in\R^p$ with $L_2$ derivative
$\Lambda^{\SSs\mathcal{P}}_{n,i,{\beta_0}}=\Lambda_{\beta_0}^{\SSs\mathcal{P}}(x_{n,i},y)$ with
$\Lambda_{\beta_0}^{\SSs\mathcal{P}}$ from \eqref{LambdaPdef} and Fisher information
$\mathcal{I}^{\SSs\mathcal{P}}_{n,{\beta_0}}$ as given in Definition~\ref{def1}
if the following conditions (i)--(iii) are fulfilled.
\renewcommand{\labelenumi}{(\roman{enumi})}
\begin{enumerate}
\item Model $\mathcal{Q}$ fulfills (H.1)--(H.3). 
\item The  Lindeberg condition~\eqref{cond1} holds for $U_{n,i}$
defined as in Definition~\ref{def1}.
\item  Let $\vartheta_{n,i,t}=\ell(\theta_{n,i,t})$ for
 $\theta_{n,i,t}=x_{n,i}^{\Tpi}\big({\beta_0}+(\mathcal{I}^{\SSs\mathcal{P}}_{n,{\beta_0}})^{-1/2} t\big)$
 and introduce the abbreviations $\mathcal{I}^{\SSs\mathcal{Q}}_{n,i,t}=\mathcal{I}^{\SSs\mathcal{Q}}_{\theta_{n,i,t}}$,
$\dot\ell_{n,i,t}=\dot\ell(\theta_{n,i,t})$, and
$ \mathcal{I}^{\SSs\mathcal{P}}_{n,i,t}=M_\pi\Big(\dot\ell_{n,i,t}^\Trsp\mathcal{I}^{\SSs\mathcal{Q}}_{n,i,t} \dot\ell_{n,i,t},x_{n,i},x_{n,i}\Big)$.
  Then, for every $b\in(0,\infty)$ it must hold
\begin{equation} \label{InfoStet2}
\lim_{n\to\infty}\sup_{|t|\leq b}  \sum_{i=1}^{i_n} \,  t_n^\Trsp (\mathcal{I}^{\SSs\mathcal{P}}_{n,i,t}-\mathcal{I}^{\SSs\mathcal{P}}_{n,i,0}) t_n  = 0\,.
\end{equation}
\end{enumerate}
\end{Thm}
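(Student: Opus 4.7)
The plan is to verify in turn the four defining conditions of Definition~\ref{def1}: centering (\ref{cent}), Lindeberg (\ref{cond1}), the square-root condition (\ref{cond2}), and its continuous version (\ref{cont2}). The idea is first to lift continuous $L_2$ differentiability of the error model $\mathcal{Q}$ to each individual $\mathcal{P}_{n,i}$ via Proposition~\ref{Hajekprop}, and then to aggregate over the triangular array using (ii) and (iii).

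Under assumption~(i), Proposition~\ref{Hajekprop} gives continuous $L_2$ differentiability of $\mathcal{Q}$ on all of $\Theta$ with derivative $\Lambda^{\SSs\mathcal{Q}}_\vartheta$ and Fisher information $\mathcal{I}^{\SSs\mathcal{Q}}_\vartheta$. Composition with the continuously differentiable map $\beta\mapsto\ell(x_{n,i}^{\Tpi}\beta)$ and the chain rule for $L_2$ derivatives then furnish, for each fixed $(n,i)$, the candidate $\Lambda^{\SSs\mathcal{P}}_{n,i,\beta_0}$ from (\ref{LambdaPdef}) with Fisher information $\mathcal{I}^{\SSs\mathcal{P}}_{n,i,0}$. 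Centering (\ref{cent}) descends from $\Ew_{\vartheta}\Lambda^{\SSs\mathcal{Q}}_\vartheta=0$, and (\ref{cond1}) is exactly assumption~(ii); only (\ref{cond2}) and (\ref{cont2}) remain to be verified.

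For (\ref{cond2}) I would exploit the integral representation from continuous individual differentiability,
$$\sqrt{dP_{n,i,\beta_0+t_n}}-\sqrt{dP_{n,i,\beta_0}}-\tfrac12 U_{n,i,\beta_0}(t)\sqrt{dP_{n,i,\beta_0}}=\tfrac12\!\int_0^1\!\Big[\sqrt{dP_{n,i,\beta_0+st_n}}\,U_{n,i,\beta_0+st_n}(t)-\sqrt{dP_{n,i,\beta_0}}\,U_{n,i,\beta_0}(t)\Big]ds.$$
Minkowski's integral inequality, squaring, and summing over $i$ then bound the quantity in (\ref{cond2}) by $\tfrac14$ times the $s$-integral of the expression in (\ref{cont2}) at $h_n=st_n$. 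It therefore suffices to prove (\ref{cont2}).

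Expanding the squared norm in (\ref{cont2}) yields
$$\sum_i\|\cdots\|^2_{\mathcal{L}_2}=t_n^\Trsp\bigl(\mathcal{I}^{\SSs\mathcal{P}}_{n,\beta_0+h_n}+\mathcal{I}^{\SSs\mathcal{P}}_{n,\beta_0}\bigr)t_n-2\,C_n(t),\qquad C_n(t):=\sum_i\!\int U_{n,i,\beta_0+h_n}U_{n,i,\beta_0}\sqrt{dP_{n,i,\beta_0+h_n}\,dP_{n,i,\beta_0}}.$$
Condition~(iii) drives each diagonal Fisher term to $|t|^2$ uniformly in $|t|\leq b$, and a double Cauchy--Schwarz instantly delivers the one-sided bound $C_n(t)\leq|t|^2+o(1)$. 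The main obstacle is the matching lower bound $C_n(t)\geq|t|^2-o(1)$. I would attack it by writing $U_{n,i,\beta_0+h_n}=U_{n,i,\beta_0}+R_{n,i}$ and $\sqrt{dP_{\beta_0+h_n}\,dP_{\beta_0}}=\sqrt{dP_{\beta_0+h_n}/dP_{\beta_0}}\,dP_{\beta_0}$; the leading piece $\sum_i\int U_{n,i,\beta_0}^2\sqrt{dP_{\beta_0+h_n}/dP_{\beta_0}}\,dP_{\beta_0}$ is then handled by a Lindeberg truncation at level $\varepsilon>0$, since assumption~(ii) annihilates the part over $\{|U_{n,i,\beta_0}|>\varepsilon\}$ uniformly in $|t|\leq b$, while on the complement the affinity $\sqrt{dP_{n,i,\beta_0+h_n}/dP_{n,i,\beta_0}}\to 1$ from pointwise continuous $L_2$ differentiability delivers the desired limit $|t|^2$; the remainders involving $R_{n,i}$ are controlled by Cauchy--Schwarz together with $\sum_i R_{n,i}^2=O(1)$ obtained from (iii). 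Keeping all these estimates uniform in $|t|\leq b$ while balancing pointwise-in-$i$ convergence against summed-in-$i$ aggregation is where the essential technical difficulty will lie.
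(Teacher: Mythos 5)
Your handling of \eqref{cent} and \eqref{cond1} agrees with the paper's (the former from $\Ew_\vartheta\Lambda^{\SSs\mathcal{Q}}_\vartheta=0$, the latter being assumption~(ii) verbatim), and the reduction of \eqref{cond2} to \eqref{cont2} via the integral representation plus Minkowski's integral inequality is a legitimate classical device. The genuine gap sits in your proof of \eqref{cont2}, in the lower bound for the cross term $C_n(t)$. After writing $U_{n,i,\beta_0+h_n}=U_{n,i,\beta_0}+R_{n,i}$ you propose to dispose of the remainder $\sum_i\int R_{n,i}\,U_{n,i,\beta_0}\,\sqrt{dP_{n,i,\beta_0+h_n}\,dP_{n,i,\beta_0}}$ by Cauchy--Schwarz together with ``$\sum_i R_{n,i}^2=O(1)$ from (iii)''. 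Cauchy--Schwarz bounds that remainder by $\bigl(\sum_i\int R_{n,i}^2\,d\mu_{n,i}\bigr)^{1/2}\bigl(\sum_i\int U_{n,i,\beta_0}^2\,d\mu_{n,i}\bigr)^{1/2}$ with $d\mu_{n,i}$ the geometric mean of the two measures; the second factor is at best $O(1)$, so an $O(1)$ bound on the first factor yields a non-vanishing error and no lower bound $C_n(t)\ge|t|^2-\Lo(1)$. You need $\sum_i\int R_{n,i}^2\,d\mu_{n,i}=\Lo(1)$ --- but up to the change of reference measure that \emph{is} the quantity \eqref{cont2} you are trying to establish, and condition~(iii) does not supply it: (iii) controls the difference of the quadratic forms $t_n^\Trsp\mathcal{I}^{\SSs\mathcal{P}}_{n,i,t}t_n-t_n^\Trsp\mathcal{I}^{\SSs\mathcal{P}}_{n,i,0}t_n$, i.e.\ of second moments each taken under its own measure, not the $L_2$-distance between the scores at the two parameter values. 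So this step is circular. (A secondary issue: the affinity argument on the truncated event needs $\max_{i\le i_n}$ of the individual Hellinger distances to vanish uniformly in $|t|\le b$, a Feller-type statement that does follow from (ii) but which you do not establish.)

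The paper sidesteps the cross-term expansion entirely. It bounds the $i$-th summand of \eqref{cond2} pointwise by $2B_{n,i}+2B_{0,i}$, where $B_{0,i}=\frac14 t_n^\Trsp\mathcal{I}^{\SSs\mathcal{P}}_{n,i,0}t_n$ and $B_{n,i}$ is the average of $t_n^\Trsp\mathcal{I}^{\SSs\mathcal{P}}_{n,i,\cdot}\,t_n$ along the segment from $\beta_0$ to $\beta_0+t_n$ (fundamental theorem of calculus for the absolutely continuous root densities, then Cauchy--Schwarz); it observes that each summand tends to $0$ by the chain rule applied to the individual model $\mathcal{P}_{n,i}$, and that (iii) gives exactly $\sum_i B_{n,i}=\sum_i B_{0,i}+\Lo(1)=|t|^2/4+\Lo(1)$; Vitali's theorem with respect to counting measure then yields $\sum_i(\cdot)\to0$, and \eqref{cont2} is obtained afterwards by the same token. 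Thus (iii) enters only as convergence of the \emph{sums of the majorants}, never as a comparison of scores at neighbouring parameters --- which is precisely the comparison your route cannot avoid. To salvage your approach you would have to prove $L_2$-continuity of the weighted scores from (H.3) by a domination argument, at which point you are back to the paper's Vitali step.
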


\section{Examples}\label{examples}

\begin{Exam}[{\small Linear regression}]\small\rm
It is obvious that Theorem \ref{Thmmain} can be applied to the linear regression model
\begin{equation}
\label{linmo}
\mathcal{P}=\{P_{\beta}(dx,dy)=F(dy-x^\Trsp \beta)K(dx)\}
\end{equation}
about the one dimensional location model
\begin{equation}
\label{locmo}
\mathcal{Q}=\{Q_{\vartheta}(dy)=F(dy-\vartheta)\}
\end{equation}
for some probability $F$ on $(\R,\B)$ with finite Fisher information
of location\ifx\ArXivS\undefined , if the latter is defined as \fi
$\sup_\varphi (\int \varphi'(x)\,dF)^2/ (\int \varphi^2\,dF)$ where
$\varphi$ varies in the set ${\cal C}^1_0(\R\to\R)$ of all continuously differentiable functions with compact support,
see \citet[Def.~4.1/Thm.~4.2]{Hub:81}---finite Fisher information
of location settles condition~(i) of Theorem~\ref{Thmmain}, condition~(ii)
as already noted boils down to $\int |x|^2\,K(dx)<\infty$ and condition~(iii) is void.


\end{Exam}
\begin{Exam}
[{\small Binomial GLM with logit link and Poisson GLM with log link}]\small\rm\hfill

The Binomial model ${\rm Binom}(m, p)$ for known size $m\in\N$, usually $m=1$, and unknown success
probability $p\in(0,1)$ has error distribution with counting density $q_p(y)={{m}\choose{y}} p^y (1-p)^{m-y}$ (on $y\in\{0,\ldots,m\}$),
hence condition~(i) of Theorem \ref{Thmmain} is obviously
fulfilled with Fisher information $\mathcal{I}^{\SSs\mathcal{Q}}_{p}=m(p(1-p))^{-1}$. Choosing a logit link, i.e.,
$\ell(\theta)=e^{\theta}/(1+e^{\theta})$,
$\mathcal{I}^{\SSs\mathcal{Q}}_{p}\dot\ell(\theta)^2=m p(1-p)$, conditions~(ii) and (iii) become
\ifx\ArXivS\undefined
\begin{eqnarray*}
\mbox{(ii)}&&
\int e^{x^\Trsp \beta}\,(1+e^{x^\Trsp \beta})^{-2}\,|x|^{2}\,K(dx)< \infty,\\
\quad \mbox{(iii)}&&
\int e^{x^\Trsp \beta} \frac{(e^{x^\Trsp s}-1)(1-e^{x^\Trsp (2\beta+s)})}{(1+e^{x^\Trsp (\beta+s)})^{2}(1+e^{x^\Trsp \beta})^{2}} \, |x|^{2}\,K(dx) \rightarrow 0, \quad s\rightarrow 0.
\end{eqnarray*}
\else
\begin{eqnarray*}
\mbox{(ii)}&&
\int \frac{e^{x^\Trsp \beta}}{(1+e^{x^\Trsp \beta})^{2}}|x|^{2}\,K(dx)< \infty,
\quad \mbox{(iii)}\quad 
\int e^{x^\Trsp \beta} \frac{(e^{x^\Trsp s}-1)(1-e^{x^\Trsp (2\beta+s)})}{(1+e^{x^\Trsp (\beta+s)})^{2}(1+e^{x^\Trsp \beta})^{2}} \, |x|^{2}\,K(dx) \rightarrow 0, \quad s\rightarrow 0.
\end{eqnarray*}
\fi
As in these expressions both integrands are bounded pointwise in $x$,
if $|x|^{2}$ is integrable w.r.t.\ $K$, the Binomial GLM with
logit-link is continuously $L_{2}$ differentiable.

The Poisson model ${\rm Pois}(\lambda)$ ($\lambda\in(0,\infty)$) has error distribution with
counting density $q_\lambda(y)=e^{-\lambda} \lambda^y / y!$ (on $y\in\N$), hence
condition~(i) of Theorem \ref{Thmmain} is obviously
fulfilled with Fisher information $\mathcal{I}^{\SSs\mathcal{Q}}_{\lambda}=\lambda^{-1}$. Choosing log link, i.e.,
$\ell(\theta)=e^{\theta}$,
$\mathcal{I}^{\SSs\mathcal{Q}}_{\lambda}\dot\ell(\theta)^2=\lambda$, conditions~(ii) and (iii) become
\begin{eqnarray*}
\mbox{(ii)}\quad 
\int e^{x^\Trsp \beta}|x|^{2} K(dx) < \infty,\qquad
\mbox{(iii)}&&
\int e^{x^\Trsp \beta}(e^{x^\Trsp s}-1) \, |x|^{2}\,K(dx)
\rightarrow 0, \quad s\rightarrow 0.
\end{eqnarray*}
Hence integrability of $e^{|x|(|\beta|+\delta)}|x|^{2}$ w.r.t.\ $K$ implies continuous $L_{2}$ differentiability
of the Poisson GLM with log-link.\medskip

These two conditions, i.e., $|x|\in L_2(K)$ for Binomial logit and $e^{|x|(|\beta|+\delta)}|x|^{2}\in L_1(K)$ for
the Poisson GLM with log-link recover the conditions mentioned in \citet[p.47]{Fa:Tu:01}.
\end{Exam}
\begin{Exam}[{\small GEVD and GPD joint shape-scale models with componentwise log link}]\small\rm\hfill

Both, the generalized extreme value distribution (GEVD) and the generalized Pareto distribution (GPD)
come with a three-dimensional parameter $(\mu,\sigma,\xi)$ for a location or threshold parameter $\mu\in\R$,
a scale parameter $\sigma\in(0,\infty)$ and a shape parameter $\xi\in \R$.
While for the GEVD, in principle the three dimensional model is $L_2$-differentiable for
$\xi\in(-1/2,0)$
and $\xi\in(0,\infty)$, respectively, in the GPD model, the model including the threshold parameter is not
covered by our theory for $L_2$-differentiable error models. The reason is basically, that
observations close to the endpoint of the support in the GPD model carry overwhelmingly much
information on the threshold. To deal with GEVD and GPD in parallel let us hence
assume $\mu$ known in both models, and, for simplicity, $\mu=0$.
Then, parameter $\vartheta$ consists of scale $\sigma$ and shape $\xi$.
In both models,  the scores $\Lambda^{\SSs\mathcal{Q}}_\vartheta$ on the quantile scale,
i.e., $\Lambda_\vartheta(F_\vartheta^{-1}(u))$
for $F_\vartheta^{-1}(u)$ the respective quantile function, include terms of order $(1-u)^\xi$.
Hence for condition~(i), we need to assume that at least $\xi>-1/2$. Depending on the context,
it can be reasonable to add further restrictions. E.g., in case of the GPD, we only obtain an unbounded support
if $\xi\ge 0$; similarly, if we restrict attention
to the special case of Fr\'echet distributions for GEV distributions, $\xi>0$ is a natural
restriction.

For parameter $\vartheta$, we consider a continuously differentiable componentwise link
function $\ell\colon\R^2\to\Theta$, i.e., the link function is of the form
$\ell(\theta)=(\ell_{\sigma}(x_{\sigma}^\Trsp\beta_{\sigma}),\ell_{\xi}(x_{\xi}^\Trsp\beta_{\xi}))$
where we partition the $p$-dimensional regressor $x$ and parameter $\beta$ accordingly to
$x=(x_{\sigma}, x_{\xi})$ and $\beta=(\beta_{\sigma}, \beta_{\xi})$ so that
$\theta=x^{\Tpi} \beta=(x_{\sigma}^\Trsp\beta_{\sigma},x_{\xi}^\Trsp\beta_{\xi})$.
Then, based on the $2\times2$ Fisher information matrix $\mathcal{I}^{\SSs\mathcal{Q}}_{\sigma, \xi}$
for joint scale and shape with entries $I_{\sigma\sigma}$, $I_{\sigma\xi}$ and $I_{\xi\xi}$, we obtain
\begin{equation*}
\dot\ell^\Trsp\mathcal{I}^{\SSs\mathcal{Q}}_{\sigma, \xi}\dot\ell
=\left(
  \begin{array}{cc}
   \dot\ell_{\sigma}^2I_{\sigma\sigma} & \dot\ell_{\sigma}\dot\ell_{\xi}I_{\sigma\xi}\\
    \dot\ell_{\sigma}\dot\ell_{\xi}I_{\sigma\xi} & \dot\ell_{\xi}^2I_{\xi\xi} \\
  \end{array}
\right)\,.
\end{equation*}
That is, conditions~(ii) and (iii) of Theorem~\ref{Thmmain} become
\begin{eqnarray*}
\mbox{(ii)}&&
\int \dot\ell_{\sigma}^2(I_{\sigma\sigma}+I_{\sigma\xi})|x_{\sigma}|^{2}K(dx)+\int\dot\ell_{\xi}^2(I_{\xi\xi}+I_{\sigma\xi})|x_{\xi}|^{2}K(dx)< \infty, \\
\mbox{(iii)}&&
\int (\dot\ell_{\sigma+s}^2(I_{\sigma+s\sigma+s}+I_{\sigma+s\xi+s})-\dot\ell_{\sigma}^2(I_{\sigma\sigma}+I_{\sigma\xi}))|x_{\sigma}|^{2}K(dx)+\\
&&+\int(\dot\ell_{\xi+s}^2(I_{\xi+s\xi+s}+I_{\sigma+s\xi+s})-\dot\ell_{\xi}^2(I_{\xi\xi}+I_{\sigma\xi}))|x_{\xi}|^{2}K(dx) \rightarrow 0, \quad s\rightarrow 0.
\end{eqnarray*}

\noindent\textbf{GEVD model:} The scale-shape model ${\rm GEVD}(0, \sigma, \xi)$ has error distribution $Q_{\vartheta}(y)=\textrm{exp}\big(-(1+\xi\frac{y}{\sigma})^{-\frac{1}{\xi}}\big)$.
As mentioned, condition~(i) of Theorem \ref{Thmmain} is fulfilled as long as $\xi \in (-1/2,0)$
or $\xi>0$. This is reflected by the Fisher information matrix which reads
\begin{equation}
\mathcal{I}^{\SSs\mathcal{Q}}_{\sigma, \xi}=\xi^{-2}D\left(
  \begin{array}{cc}
   I_{\sigma\sigma} & I_{\sigma\xi}\\
    I_{\sigma\xi} & I_{\xi\xi} \\
  \end{array}\label{FIGEV}
\right)D,\quad  \textrm{where}\quad D^{-1}={\rm diag}(\sigma,\xi)\quad \textrm{and}\quad%
\ifx\ArXivS\undefined\vspace*{-4ex}\fi
\end{equation}
\ifx\ArXivS\undefined%
\begin{align*}
I_{\sigma\sigma} &= (\xi+1)^{2}\Gamma(2\xi+1)-2(\xi+1)\Gamma(\xi+1)+1,\\
I_{\sigma\xi}&=-(\xi+1)^{2}\Gamma(2\xi+1)+(\xi^{2}+4\xi+3)\Gamma(\xi+1)+(\xi^{2}+\xi)\Gamma'(\xi)\Gamma(\xi+1)-\xi\Gamma'(1)-\xi-1,\\
I_{\xi\xi}&=(\xi+1)^{2}\Gamma(2\xi+1)-2\Gamma(\xi+3)-2\xi\Gamma'(\xi)\Gamma(\xi+2)+2\xi(\xi+1)\Gamma'(1)+\\&\quad+
\xi^{2}(\Gamma''(1)+(\Gamma'(1))^{2})+(\xi+1)^{2}
\end{align*}
\else%
\begin{align*}
I_{\sigma\sigma} &= (\xi+1)^{2}\Gamma(2\xi+1)-2(\xi+1)\Gamma(\xi+1)+1,\\
I_{\sigma\xi}&=-(\xi+1)^{2}\Gamma(2\xi+1)+(\xi^{2}+4\xi+3)\Gamma(\xi+1)+(\xi^{2}+\xi)\Gamma'(\xi)\Gamma(\xi+1)-\xi\Gamma'(1)-\xi-1,\\
I_{\xi\xi}&=(\xi+1)^{2}\Gamma(2\xi+1)-2\Gamma(\xi+3)-2\xi\Gamma'(\xi)\Gamma(\xi+2)+2\xi(\xi+1)\Gamma'(1)+
\xi^{2}(\Gamma''(1)+(\Gamma'(1))^{2})+(\xi+1)^{2}
\end{align*}
\fi
and has singularities in $\xi=0$ and $\xi=-1/2$.

\noindent\textbf{GPD model:} The scale-shape model ${\rm GPD}(0, \sigma, \xi)$, has a  c.d.f.\ of
$Q_{\vartheta} (y)=1-(1+\xi\frac{y}{\sigma})^{-\frac{1}{\xi}}$ and here,
for $\sigma>0$ and $\xi>-\frac{1}{2}$ condition~(i) is fulfilled with Fisher information matrix:
\begin{equation*}
\mathcal{I}^{\SSs\mathcal{Q}}_{\sigma, \xi}=\frac{1}{1+2\xi}\,D\,\left(\!\!
  \begin{array}{cc}
   1, & 1 \!\!\\
    1, & 2(\xi+1) \!\!
  \end{array}
\right)\,D,\qquad D^{-1}={\rm diag}(\sigma,\xi+1).
\end{equation*}
Again failure of condition~(i) is reflected by  a singularity at $\xi=-1/2$ of the Fisher information.

The canonical link function for the scale is log link
$\ell_{\sigma}(x_{\sigma}^\Trsp\beta_{\sigma})=\exp({x_{\sigma}^\Trsp\beta_{\sigma}})$,
whereas due to a lack of equivariance in the shape, there is no such
canonical link for this parameter. For our  GEVD and GPD applications, however,
(non-regression-based) empirical evidence speaks for shape  $\xi$ varying in $(0, 2)$.
So a good link should not necessarily exclude values $\xi\not\in(0,2)$, but make them
rather hard to attain. For this paper we even impose the sharp restriction $\xi>0$.

Moreover, to use GLMs with GEVD and GPD  errors in
time series context to model parameter driven time dependencies
in the terminology of \citet{Cox:81}, a real challenge is to design
(smooth and isotone) link functions such that the regressors may themselves follow a GEVD or a GPD distribution, as
this implies very heavy tails against which we have to integrate.
More specifically, we aim at constructing an AR-type
time series for the scale and shape of the form
\begin{equation} \label{timeseriesdef}
\hspace*{-.2cm}X_t\sim {\rm GEVD}(\ell(X_{(t-1):(t-p_1)}{\!\!}^\Trsp\beta_{\sigma},X_{(t-1):(t-p_2)}{\!\!}^\Trsp\beta_{\xi}))\quad
\text{for}\;\;X_{(t-1):(t-p)}=(X_{t-1},\ldots,X_{t-p})\,.
\end{equation}
In this model, negative values of $\beta_\xi$ would dampen clustering of
extremes, as then usually a large value stemming from a large positive shape
parameter will be followed by an observation with low or even negative shape
hence with much lighter tails, thus in general a smaller value; correspondingly
$\beta_\xi$ positive will foster clustering of extremes.

A straightforward guess would be to use the log link, but this
does not work for GEVD or GPD time series, as then integrability (ii) fails.
Thus, besides being smooth (for our theorem) and strictly increasing
(for identifyability), an admissible  link function must grow extremely slowly.
To get candidates in case of the GEVD, note that all terms of the Fisher
information matrix for GEVD are dominated by term $\Gamma(2\xi+1)$, so conditions~(ii)
and (iii) are fulfilled if for large positive values $\theta_\xi$, the link function
grows so slowly to $\infty$ that $\Gamma(2\ell_{\xi}(\theta_\xi))\approx \log(\theta_\xi)$,
  which for large $x$ amounts to a behavior like the iterated logarithm $\log(\log(x))$; analogue arguments in case of the GPD suggest  $\ell_{\xi}(\theta_\xi) \approx \log(\theta_\xi)$.

One possibility to achieve this for the GEVD for $p=1$ is $\ell_{\xi}(\theta_\xi)=\log(f(\log(x_\xi)^\Trsp\beta_\xi))$
 where $f(x)$ for $x>0$ is quadratic like $x^2/2+x+1$ and for $x<0$ behaves like $a_1/(\log(a_2-x))^2+a_3$
for some $a_1,a_2,a_3>0$ such that $f$ is continuously differentiable in $0$ and $f(x)>e^{-1/2}$ always.
As is shown in \ref{app4}, this choice indeed fulfills conditions~(ii) and (iii).

\NewInRev{With the singularity in $\xi=0$ of $\mathcal{I}^{\SSs\mathcal{Q}}_{\sigma, \xi}$ in  \eqref{FIGEV},
in many applications, it may turn out useful though to restrict shape~$\xi$ to lie in either $(-1/2,0)$ or in $(0,\infty)$; correspondingly,
one could suggest a rescaled binomial link $\ell=\ell^{\SSs \rm Binom}/2-1/2$ for the first case and shifting the link function $\ell_\xi$
sketched above to $\tilde \ell_\xi=\ell_\xi+1/2$ in the second.\vspace*{-1ex}}
%
\end{Exam}
%
{\small Of course, given an admissible link function, the next question would be
whether for given starting values $x_{-1},\ldots,x_{-\max(p_1,p_2)}$
a time series  defined according to \eqref{timeseriesdef} for $t\ge 0$, using this link function
is (asymptotically) stationary. This is out of scope for this paper and
will be dealt with elsewhere.}

%
\appendix
\ifx\ArXivS\undefined\relax\else\small\fi
\def\theThm{A.\arabic{Thm}}
\def\theHThm{A.\arabic{HThm}}
\section{Proofs}
\NewInRev{\subsection{Proof of H\'ajek's auxiliary result Proposition~\ref{Hajekprop}}
\begin{proof}{}
Apparently, (H.1) and (H.2) are implied by continuous differentiability of the
densities $dQ_\vartheta(y)$ w.r.t.\ $\vartheta$.
\citet{Haj:72} gives a proof of Proposition~\ref{Hajekprop} for $dQ_\vartheta(y)$ Lebesgue densities and for $k=1$, but
our notion of absolute continuity for $k>1$ from p.~\pageref{Hajekprop} reduces the problem to the situation of $k=1$,
which is possible here, as we require (H.1)--(H.3) on open neighborhoods.
In addition, H\'ajek requires (H.1) for every $y$. Looking into his proof of his Lemma~A.2, though,
one does not need that $dQ_\vartheta(y)$ be Lebesgue densities, and in his Lemma~A.3 one only needs
absolute continuity for $Q_{\vartheta_0}$-a.e.\ $y$. Finally, the asserted
continuous $L_2$ differentiability (not mentioned in the cited reference)
with regard to Definition~\ref{def} is just (H.3).
A similar result, already for $k\ge 1$, but only for dominated $\mathcal Q$ and for
continuous differentiability of $dQ_\vartheta(y)$ w.r.t.\ $\vartheta$ for
$Q_{\vartheta_0}$-a.e.\ $y$, is \citet[Satz~1.194]{Wi:1985}.
\end{proof}}


\subsection{Proof of the Chain rule}

\begin{Lem}[Chain rule]\label{Thmchainrule}
Let $\mathcal{Q}=\{Q_\vartheta\,\mid\,\vartheta\in\Theta\}$ a parametric model with open parameter domain $\Theta\subset\R^k$.
Assume $\mathcal{Q}$ is $L_{2}$ differentiable in $\vartheta_{0}\in \Theta$ with derivative $\Lambda_{\vartheta_{0}}^{\SSs\mathcal{Q}}$ and Fisher information $I_{\vartheta_{0}}^{\SSs\mathcal{Q}}$. Let $\ell\colon \Theta'\to \Theta$ with domain $\Theta'\subset \R^{k'}$ be differentiable in some $\theta_0\in\Theta'$ such that $\ell(\theta_0)=\vartheta_0$ and with derivative denoted by $\dot\ell(\theta_0)$.
Then $\tilde{\mathcal{Q}}=\{\tilde{Q}_{\vartheta}=Q_{\ell(\theta)}\,\mid\,\theta\in\Theta'\}$
is $L_{2}$ differentiable in $\theta_{0}$ with derivative
$\Lambda_{\theta}^{\SSs\tilde{\mathcal{Q}}}=(\dot{\ell}(\theta_{0}))^\Trsp \Lambda_{\vartheta_{0}}^{\SSs\mathcal{Q}}$
and Fisher information $I_{\theta}^{\SSs\tilde{\mathcal{Q}}}=(\dot{l}(\theta_{0}))^\Trsp I_{\vartheta_{0}}^{\SSs\mathcal{Q}}\dot{\ell}(\theta_{0})$.
If $\mathcal{Q}$ is continuously $L_{2}$ differentiable in $\vartheta_{0}$, so is $\tilde {\mathcal{Q}}$ in $\theta_0$.
\end{Lem}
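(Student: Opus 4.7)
\subsection*{Proof plan for the Chain Rule, Lemma~\ref{Thmchainrule}}

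My plan is to reduce $L_2$-differentiability of $\tilde{\mathcal{Q}}$ at $\theta_0$ to that of $\mathcal{Q}$ at $\vartheta_0$ by running the $L_2$-definition with the increment $k(h):=\ell(\theta_0+h)-\ell(\theta_0)$ in place of $h$, and then replacing $k(h)$ by its linearization $\dot\ell(\theta_0)h$ at the level of the candidate score. Concretely, I would first use the differentiability of $\ell$ at $\theta_0$ to write
\begin{equation*}
k(h)=\dot\ell(\theta_0)\,h + r(h),\qquad |r(h)|=\Lo(|h|),\qquad |k(h)|=\LO(|h|).
\end{equation*}
Since $\ell(\theta_0+h)=\vartheta_0+k(h)$ lies in $\Theta$ for $h$ small, the $L_2$-differentiability of $\mathcal{Q}$ at $\vartheta_0$ applied to the increment $k(h)$ yields
\begin{equation*}
\Bigl\|\sqrt{dQ_{\vartheta_0+k(h)}}-\sqrt{dQ_{\vartheta_0}}\bigl(1+\tfrac{1}{2}(\Lambda^{\SSs\mathcal{Q}}_{\vartheta_0})^{\Trsp} k(h)\bigr)\Bigr\|_{\mathcal{L}_2}=\Lo(|k(h)|)=\Lo(|h|).
\end{equation*}

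Next I would show that replacing $k(h)$ by $\dot\ell(\theta_0)h$ in the inner expansion costs only an $\Lo(|h|)$ term. By the triangle inequality the remaining contribution is bounded by
\begin{equation*}
\tfrac{1}{2}\Bigl\|\sqrt{dQ_{\vartheta_0}}\,(\Lambda^{\SSs\mathcal{Q}}_{\vartheta_0})^{\Trsp} r(h)\Bigr\|_{\mathcal{L}_2}\leq \tfrac{1}{2}|r(h)|\cdot\bigl\|\Lambda^{\SSs\mathcal{Q}}_{\vartheta_0}\bigr\|_{\mathcal{L}_2(P_{\vartheta_0})},
\end{equation*}
which is $\Lo(|h|)$ because $\Lambda^{\SSs\mathcal{Q}}_{\vartheta_0}\in L_2^k(P_{\vartheta_0})$. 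Combining the two $\Lo(|h|)$ bounds and recognizing $(\Lambda^{\SSs\mathcal{Q}}_{\vartheta_0})^{\Trsp}\dot\ell(\theta_0)h=(\dot\ell(\theta_0)^{\Trsp}\Lambda^{\SSs\mathcal{Q}}_{\vartheta_0})^{\Trsp} h$ identifies $\Lambda^{\SSs\tilde{\mathcal{Q}}}_{\theta_0}=\dot\ell(\theta_0)^{\Trsp}\Lambda^{\SSs\mathcal{Q}}_{\vartheta_0}$ as the $L_2$-derivative, and the Fisher information formula follows directly from $\mathcal{I}^{\SSs\tilde{\mathcal{Q}}}_{\theta_0}=\Ew_{\vartheta_0}\Lambda^{\SSs\tilde{\mathcal{Q}}}_{\theta_0}(\Lambda^{\SSs\tilde{\mathcal{Q}}}_{\theta_0})^{\Trsp}$.

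For the continuous version I would assume $\ell$ is continuously differentiable near $\theta_0$ (needed to speak of $\Lambda^{\SSs\tilde{\mathcal{Q}}}_{\theta_0+h}$), and split the difference to be bounded into
\begin{equation*}
\sqrt{dQ_{\ell(\theta_0+h)}}\bigl(\dot\ell(\theta_0+h)^{\Trsp}\Lambda^{\SSs\mathcal{Q}}_{\ell(\theta_0+h)}\bigr)^{\Trsp} t-\sqrt{dQ_{\vartheta_0}}\bigl(\dot\ell(\theta_0)^{\Trsp}\Lambda^{\SSs\mathcal{Q}}_{\vartheta_0}\bigr)^{\Trsp} t
\end{equation*}
plus the term where $\dot\ell(\theta_0+h)$ is swapped for $\dot\ell(\theta_0)$. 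The first piece goes to $0$ uniformly in $|t|\leq 1$ by continuous $L_2$-differentiability of $\mathcal{Q}$ applied to the vector $\dot\ell(\theta_0)t$ (whose norm is controlled by $|\dot\ell(\theta_0)|$); the second piece is bounded by $|\dot\ell(\theta_0+h)-\dot\ell(\theta_0)|\cdot\|\Lambda^{\SSs\mathcal{Q}}_{\ell(\theta_0+h)}\|_{\mathcal{L}_2}$, which vanishes by continuity of $\dot\ell$ combined with the continuity in $\vartheta$ of $\|\Lambda^{\SSs\mathcal{Q}}_{\vartheta}\|_{\mathcal{L}_2}^{2}=\tr\mathcal{I}^{\SSs\mathcal{Q}}_{\vartheta}$ implied by continuous $L_2$-differentiability.

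The only mildly delicate step is the second one, namely turning the estimate with increment $k(h)$ into an estimate with increment $\dot\ell(\theta_0)h$; this is where one must carefully use that $\sqrt{dQ_{\vartheta_0}}\,\Lambda^{\SSs\mathcal{Q}}_{\vartheta_0}\in L_2$ to absorb the $o(|h|)$ remainder of $\ell$. Everything else is a routine triangle-inequality bookkeeping exercise.
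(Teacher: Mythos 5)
Your proposal is correct and follows essentially the same route as the paper's proof: linearize $\ell$ at $\theta_0$, apply the $L_2$-expansion of $\mathcal{Q}$ with the nonlinear increment $k(h)=\ell(\theta_0+h)-\ell(\theta_0)$, and absorb the remainder $\tfrac12\sqrt{dQ_{\vartheta_0}}(\Lambda^{\SSs\mathcal{Q}}_{\vartheta_0})^{\Trsp}r(h)$ using $\Lambda^{\SSs\mathcal{Q}}_{\vartheta_0}\in L_2^k(Q_{\vartheta_0})$ (the paper does this via the elementary bound $A_n^2\le 2(A_n-B_n)^2+2B_n^2$ rather than the $L_2$ triangle inequality, which is an immaterial difference). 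Your explicit treatment of the continuity assertion, including the observation that it requires $\dot\ell$ to exist and be continuous near $\theta_0$, is in fact more detailed than the paper's proof, which leaves that part implicit.
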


\begin{proof}{}
Let $h_{n}\rightarrow 0, n\rightarrow\infty$ in $\R^{k'}$, $|h_{n}|\neq 0$.
We take $\vartheta_{n}:=\ell(\theta_{0}+h_{n})$, $\vartheta_{0}:=\ell(\vartheta_{0})$.
Smoothness of link function $\ell$ implies:
\begin{equation}
\label{smoo}
\vartheta_{n}=\ell(\theta_{0}+h_{n})=\vartheta_{0}+\dot{\ell}(\theta_{0})h_{n}+r(\theta_{0},h_{n}),
\end{equation}
for some remainder function $r$ such that
\begin{equation}
\label{rest1}
\lim_{n\to \infty}r(\theta_{0},h_{n})/|h_{n}| =0\,.
\end{equation}
Let $Q_{\vartheta_{n}}$ be dominated by some measure $\nu$ with 
density $q_{\vartheta_{n}}$, i.e.,
$
dQ_{\vartheta_{n}}=q_{\vartheta_{n}}d\nu
$. 
By $L_{2}$ differentiability of model $Q$ for
$R_{n}:=\int \Big(\sqrt{q_{\vartheta_{n}}}-\sqrt{q_{\vartheta_{0}}}\big(1+\frac{1}{2}(\Lambda_{\vartheta_{0}}^{\SSs\mathcal{Q}})^\Trsp (\vartheta_{n}-\vartheta_{0})\big)\Big)^{2}d\nu$,
we have
\begin{equation}
\label{rest2}
\lim_{n\to\infty} R_{n}/|\vartheta_{n}-\vartheta_{0}|^{2}=0\,.
\end{equation}
But by (\ref{smoo}) we may write $R_n$ as $R_{n}=\int(A_{n}-B_{n})^{2}\,d\nu $ for
\begin{equation*}
A_{n}:=\sqrt{q_{\vartheta_{n}}}-\sqrt{q_{\vartheta_{0}}}\Big(1+\frac{1}{2}(\Lambda_{\vartheta_{0}}^{\SSs\mathcal{Q}})^\Trsp \dot{l}(\vartheta_{0})h_{n}\Big)\quad \textrm{and} \quad
B_{n}:=\frac{1}{2}\sqrt{q_{\vartheta_{0}}}(\Lambda_{\vartheta_{0}}^{\SSs\mathcal{Q}})^\Trsp r(\vartheta_{0},h_{n}).
\end{equation*}
Now, Cauchy-Schwarz entails that
$
A_{n}^{2}
\leq 2(A_{n}-B_{n})^{2}+2B_{n}^{2}.
$ 
Therefore
\begin{eqnarray*}
\int A_{n}^{2}d\nu&\leq& 2\int(A_{n}-B_{n})^{2}d\nu+2\int B_{n}^{2}d\nu=
2R_{n}+2\int B_{n}^{2}d\nu\leq\\
&\leq&2R_{n}+\frac{1}{2}|r(\vartheta_{0},h_{n})|^{2}\int q_{\vartheta_{0}}|\Lambda_{\vartheta_{0}}^{\SSs\mathcal{Q}}|^{2}d\nu\leq
2R_{n}+\frac{1}{2}|I_{\vartheta_{0}}^{\SSs\mathcal{Q}}||r(\vartheta_{0},h_{n})|^{2}.
\end{eqnarray*}
Hence, using \eqref{smoo}, \eqref{rest1}, and \eqref{rest2}
$$
\frac{1}{|h_{n}|^{2}}\int A_{n}^{2}d\nu = 
\frac{2R_{n}}{|\vartheta_{n}-\vartheta_{0}|^{2}}\frac{\Big(\dot{l}(\vartheta_{0})h_{n}+r(\vartheta_{0},h_{n})\Big)^2}{|h_{n}|^{2}}+\frac{1}{2}|I_{\vartheta_{0}}^{\SSs\mathcal{Q}}|\frac{|r(\vartheta_{0},h_{n})|^{2}}{|h_{n}|^{2}}=
\Lo(1). 
$$
That is, by Definition \ref{def} model $\tilde{Q}$
is $L_{2}$ differentiable in $\vartheta_{0}\in \Theta'$.
\end{proof}


\subsection{Proof of Theorem~\ref{Thmmain}}

Let $s_n\rightarrow 0 \in\R^p$ for $n\to\infty$ such that $\tilde s_n=s_n/|s_n| \to \tilde s_0$
for some $\tilde s_0$ with $|\tilde s_0|=1$.
We take $\vartheta_{s}:=\ell(\theta_{s})$, $\theta_{s}:=x^\Trsp ({\beta_0}+s)$,
$\dot\ell_s=\dot\ell(\theta_{s})$.
Let $dQ_{\vartheta_{n}}=q_{\vartheta_{n}}\,d\nu$.
By Definition~\ref{def} the GLM $\mathcal{P}$
is $L_{2}$ differentiable at every $\beta\in\R^p$ if
$
\lim\limits_{n\to\infty}|s_{n}|^{-2}\!\!\int\int\tilde{A}_{n}^{2}\,\nu(dy)\,K(dx)=0
$ 
for the $A_n$ from Lemma~\ref{Thmchainrule} now taking up the dependence on $x$, i.e.,
\begin{equation} \label{TAnDef}
\tilde{A}_{n}=\tilde{A}_{n}(x,y):=\sqrt{q_{\vartheta_{n}}}-\sqrt{q_{\vartheta_{0}}}\,\Big(1+\frac{1}{2}(\Lambda_{\ell(x^\Trsp {\beta_0})}^{\SSs\mathcal{Q}})^\Trsp \dot{\ell}(x^\Trsp {\beta_0}) \dpi x^\Trsp s_{n}\Big)\,.
\end{equation}
Here (pointwise) existence (for $P_\beta$-a.e.\ $(x,y)$) and form of the $L_2$-derivative
follow from (H.1) and the chain rule applied pointwise (in $(x,y)$).
%
The proof of Lemma~\ref{Thmchainrule} for $K$-a.e.\ $x$ and $s$ small enough
provides some function $z(s)\rightarrow0$ such that
\begin{equation*}
\int\tilde{A}_{n}^{2}\nu(dy)=|x^\Trsp s_{n}|^{2}(z(x^\Trsp s_{n}))^{2}.
\end{equation*}
Hence, for $K$-a.e.\ fixed $x$, $\tilde A_n'(x):=|s_n|^{-2} \int\tilde{A}_{n}^{2}\nu(dy) \to 0$.
For Lebesgue measure $\lambda$, fixed $x\in\R^p$ and $u\in[0,1]$ by the fundamental theorem of calculus
for absolutely continuous functions, for $K$-a.e.\ fixed $x$ we obtain
\ifx\ArXivS\undefined%
\begin{eqnarray*}
\hspace*{-0.5cm}&\!\!\!\!&|s_n|^{-2}\!\!\int\Big(\sqrt{q_{\vartheta_{s_n}}}-\sqrt{q_{\vartheta_{0}}}\Big)^{2}\,d\nu=|s_n|^{-2}\!\!\int\Big(\int^{1}_{0}
\Tfrac{1}{2}\sqrt{q_{\vartheta_{us_n}}}(\dot{\ell}_{us_n}^\Trsp\Lambda_{\vartheta_{us_n}}^{\SSs\mathcal{Q}} \!\!\dpi x^\Trsp s_n)\,\lambda(du)
\Big)^{2}\,d\nu\leq\\
\hspace*{-0.5cm}&\!\!\!\!&\quad\leq\Tfrac{1}{4|s_n|^{2}}\int\,\int^{1}_{0}q_{\vartheta_{us_n}}(\dot{\ell}_{us_n}^ \Trsp\Lambda_{\vartheta_{us_n}}^{\SSs\mathcal{Q}} \dpi x^\Trsp s_n)^{2}\,\lambda(du)\,d\nu
= \frac{1}{4} \tilde s_n^\Trsp \int^{1}_{0}  \mathcal{I}_{\vartheta_{us_n}}^{\SSs\mathcal{P}}(x) \,\lambda(du)\,\tilde s_n=
\\ \hspace*{-0.5cm}&\!\!\!\!&\quad=
\frac{1}{4|s_n|} \tilde s_n^\Trsp \int^{|s_n|}_{0} \mathcal{I}_{\vartheta_{u\tilde s_n}}^{\SSs\mathcal{P}}(x)
\,\lambda(du)\,\tilde s_n
=: B_n(x)\,.
\end{eqnarray*}
\else
\begin{eqnarray*}
\hspace*{-0.5cm}&\!\!\!\!&|s_n|^{-2}\!\!\int\Big(\sqrt{q_{\vartheta_{s_n}}}-\sqrt{q_{\vartheta_{0}}}\Big)^{2}\,d\nu=|s_n|^{-2}\!\!\int\Big(\int^{1}_{0}
\Tfrac{1}{2}\sqrt{q_{\vartheta_{us_n}}}(\dot{\ell}_{us_n}^\Trsp\Lambda_{\vartheta_{us_n}}^{\SSs\mathcal{Q}} \!\!\dpi x^\Trsp s_n)\,\lambda(du)
\Big)^{2}\,d\nu\leq\\
\hspace*{-0.5cm}&\!\!\!\!&\quad\leq\Tfrac{1}{4|s_n|^{2}}\int\,\int^{1}_{0}q_{\vartheta_{us_n}}(\dot{\ell}_{us_n}^ \Trsp\Lambda_{\vartheta_{us_n}}^{\SSs\mathcal{Q}} \dpi x^\Trsp s_n)^{2}\,\lambda(du)\,d\nu
= \frac{1}{4} \tilde s_n^\Trsp \int^{1}_{0}  \mathcal{I}_{\vartheta_{us_n}}^{\SSs\mathcal{P}}(x) \,\lambda(du)\,\tilde s_n=
\frac{1}{4|s_n|} \tilde s_n^\Trsp \int^{|s_n|}_{0} \mathcal{I}_{\vartheta_{u\tilde s_n}}^{\SSs\mathcal{P}}(x)
\,\lambda(du)\,\tilde s_n
=: B_n(x)\,.
\end{eqnarray*}
\fi
Now, introduce $B_0=\tilde s_n^\Trsp \mathcal{I}_{\vartheta_{0}}^{\SSs\mathcal{P}}(x) \tilde s_n / 4$.
By (ii) and (iii) $\int B_n(x) \,K(dx)$ is finite eventually in $n$, and by (iii) and Fubini
$$\int B_n(x) \,K(dx) = \frac{1}{4} \int^{|s_n|}_{0} \int |\mathcal{I}_{\vartheta_{u\tilde s_n}}^{\SSs\mathcal{P}}(x)| \,K(dx)\, \lambda(du) = \int B_0(x) \,K(dx) + \Lo(1)\,.
$$
Hence, by Vitali's Theorem (e.g.\ \citet[Prop.~A.2.2]{Ri:1994}),
$B_n$ is uniformly integrable (w.r.t.\ $K$), and, as $\tilde A_n'(x)\leq 2B_n(x)+2B_0(x)$,
so is $\tilde A_n'(x)$, and again by Vitali's Theorem, $\int \tilde A_n'(x)\, K(dx)\to 0$ which is \eqref{L2dk}.
Continuity \eqref{L2dc} with regard to Vitali's Theorem is just continuity of the Fisher information just shown.\smallskip\\
The assertion of Remark~\ref{Rem26} is shown similarly, replacing the $B_n$ and $B_0$ from above
with $|\mathcal{I}^{\SSs\mathcal{Q}}_{\vartheta_{st}}| \,|\dot{\ell}_{st}|^2 \,|x|^2$ resp.\
$|\mathcal{I}^{\SSs\mathcal{Q}}_{\vartheta_{0}}| \,|\dot{\ell}_{0}|^2 \,|x|^2$.
\qed

\subsection{Proof of Theorem~\ref{ThmmainDet}}

For selfcontainedness, we reproduce the argument for condition~\eqref{cent} from \citet[Thm.~2.3.7]{Ri:1994}.
In model $\mathcal{Q}$, by \eqref{L2dk}, assuming $\nu$-densities
\ifx\ArXivS\undefined
\begin{eqnarray*}
\Big|\!\int \!\Big(\!\sqrt{q_{\vartheta+h}}-\sqrt{q_{\vartheta}}(1+\Tfrac{1}{2}(\Lambda^{\SSs\mathcal{Q}}_{\vartheta})^\Trsp h)\!\Big) \sqrt{q_{\vartheta}}\,d\nu\,\Big|^2\leq 
\int |\sqrt{q_{\vartheta+h}}-\sqrt{q_{\vartheta}}(1+\Tfrac{1}{2}(\Lambda^{\SSs\mathcal{Q}}_{\vartheta})^\Trsp h)|^2\,d\nu
\end{eqnarray*}
where the RHS by $L_2$-differentiability of ${\cal Q}$ is $\Lo(|h|^2)$.
\else
\begin{eqnarray*}
&&\Big|\int \Big(\sqrt{q_{\vartheta+h}}-\sqrt{q_{\vartheta}}(1+\frac{1}{2}(\Lambda^{\SSs\mathcal{Q}}_{\vartheta})^\Trsp h)\Big) \sqrt{q_{\vartheta}}\,d\nu\,\Big|^2\leq 
\int |\sqrt{q_{\vartheta+h}}-\sqrt{q_{\vartheta}}(1+\frac{1}{2}(\Lambda^{\SSs\mathcal{Q}}_{\vartheta})^\Trsp h)|^2\,d\nu = \Lo(|h|^2)
\end{eqnarray*}
\fi
Hence,
\ifx\ArXivS\undefined
\begin{eqnarray*}
\Ew_\vartheta (\Lambda^{\SSs\mathcal{Q}}_{\vartheta})^\Trsp h + \Lo(|h|)\!\!\!&\!\!\!=\!\!\!&\!\!\!
\int (\sqrt{q_{\vartheta+h}}-\sqrt{q_{\vartheta}}) \sqrt{q_{\vartheta}} \,d\nu =
- \int (\sqrt{q_{\vartheta+h}}-\sqrt{q_{\vartheta}})^2 \,d\nu / 2 =\\
\!\!\!&\!\!\!=\!\!\!&\!\!\!
- h^\Trsp \mathcal{I}^{\SSs\mathcal{Q}}_{\vartheta} h/2 +\Lo(|h|^2)=
\Lo(|h|)\,.
\end{eqnarray*}
\else
\begin{eqnarray*}
\Ew_\vartheta (\Lambda^{\SSs\mathcal{Q}}_{\vartheta})^\Trsp h\!\!\!&\!\!\!=\!\!\!&\!\!\!
\int (\sqrt{q_{\vartheta+h}}-\sqrt{q_{\vartheta}}) \sqrt{q_{\vartheta}} \,d\nu + \Lo(|h|)=
\int \sqrt{q_{\vartheta+h}} \sqrt{q_{\vartheta}} \,d\nu -1 + \Lo(|h|)=\\
\!\!\!&\!\!\!=\!\!\!&\!\!\!
-1/2 \int (\sqrt{q_{\vartheta+h}}-\sqrt{q_{\vartheta}})^2 \,d\nu + \Lo(|h|)=-1/2 h^\Trsp \mathcal{I}^{\SSs\mathcal{Q}}_{\vartheta} h +\Lo(|h|^2)+\Lo(|h|)=\Lo(|h|)
\end{eqnarray*}
\fi
So $\Ew_\vartheta \Lambda^{\SSs\mathcal{Q}}_{\vartheta}=0$, and hence also
$\Ew_{n,i,{\beta_0}} \Lambda^{\SSs\mathcal{P}}_{n,i,{\beta_0}}=0$. Lindeberg condition~\eqref{cond1} is assumed
without change, so it only remains to show condition~\eqref{cond2}.
Let $N_{n,i}$ be the $Q_{\vartheta_{n,i,t_n}}$-null set such that both (H.1) and (H.2) hold for
all $y\in N_{n,i}^{c}$. Let $N=\bigcup_n\bigcup_{i=1}^{i_n} N_{n,i}$.  Then as in the case of
stochastic regressors, from (H.1) and the chain rule
applied pointwise (in $y\in N^c$) we obtain (pointwise)
existence and form of the $L_2$-derivative.
Let $\tilde A_n$ from \eqref{TAnDef} now take up the dependence on $x_{n,i}$, i.e.,
$
\tilde{A}_{n,i}=\tilde{A}_{n}(x_{n,i})
$  (with $s_n$ from the preceding proof substituted by $t_n$)
so that in particular, for every fixed $i$,
$\tilde A_{n,i}':=\int\tilde{A}_{n,i}^{2}\nu(dy) \to 0$ as $t_n\to 0$.
For condition~\eqref{cond2} we have to show that
$
\lim_{n\rightarrow\infty}\sup_{\left|t\right|\leq b} \sum_{i=1}^{i_{n}} \int \, \tilde A_{n,i}^2 \nu(dy)=0
$.
But, similarly as in the preceding proof for fixed $i$, 
by the fundamental theorem of calculus for absolutely continuous functions, we have
\begin{eqnarray*}
\tilde A_{n,i}'&=&\int\Big(\sqrt{q_{\vartheta_{n,i,t_n}}}-\sqrt{q_{\vartheta_{n,i,0}}}\Big)^{2}\,d\nu
\leq \frac{1}{4|t_n|} \int^{|t_n|}_{0} t_n^\Trsp \mathcal{I}^{\SSs\mathcal{P}}_{n,i,ut} t_n \,\lambda(du) =: B_{n,i}\,.
\end{eqnarray*}
Now, introduce $B_{0,i}=\frac{1}{4} t_n^\Trsp \mathcal{I}^{\SSs\mathcal{P}}_{n,i,0} t_n$ and
note that $\sum_{i=1}^{i_n} \mathcal{I}^{\SSs\mathcal{P}}_{n,i,0} = \mathcal{I}^{\SSs\mathcal{P}}_{n,{\beta_0}}$,
so $t_n^\Trsp \mathcal{I}^{\SSs\mathcal{P}}_{n,i,0} t_n= |t|^2 \leq b$ and by (iii)
$\sum_{i=1}^{i_n}  B_{n,i} = \sum_{i=1}^{i_n}  B_{0,i} + \Lo(1) = |t|^2/4 + \Lo(1)
$.
Hence, by Vitali's Theorem, $B_{n,i}$ is uniformly integrable (w.r.t. the counting measure), and, as $\tilde A_{n,i}'\leq 2B_{n,i}+2B_{0,i}$,
so is $\tilde A_{n,i}'$, and again by Vitali's Theorem, $\sum_{i=1}^{i_n} \tilde A_{n,i}' \to 0$.
Finally, continuity~\eqref{cont2} again with regard to Vitali's Theorem  is just continuity of the Fisher information
just proven.\smallskip\\
\qed


\subsection{Link function for the GEVD shape model}
\label{app4}

For GEVD for the shape we have chosen link function $\ell=\log(f(\beta\log(x_{t-1}))$, for $$f(x)=(x^2/2+x+1) \Jc(x>0)+(a_1(\log(a_2-x))^{-2}+a_3) \Jc(x\le 0)$$ for some $a_1,a_2,a_3>0$. The constants $a_1,a_2,a_3$ are chosen so that $f$ is continuously differentiable in $0$ and $f(x)>e^{-1/2}$ always, i.e.
\begin{equation} \label{linkvors}
\frac{a_1}{(\log(a_2))^2}+a_3=\frac{2a_1}{a_2(\log(a_2))^3}=1, \quad \frac{a_1}{(\log(a_2-x))^2}+a_3>e^{-1/2}, \forall x<0.
\end{equation}
Since $a_1(\log(a_2-x))^{-2}>0$, to ensure the last inequality we let $a_3=e^{-1/2}\approx0.6063$. Solving the system of equations we get  $a_2^{a_2}=e^{2(1-e^{-0.5})}$, so $ a_2\approx1.624$ and $a_1=0.5a_2(\log(a_2))^3\approx0.00926$.

As said, shape is usually varying in $(0,2)$. As visible from the Figure~\ref{Rplot}, this interval corresponds to  an argument
of the link function $x=\beta\log(x_{t-1})$ ranging in $(-\infty, \sqrt{1-2(1-e^2)}-1\approx 2.712)$;
hence, for $\beta=1$, $\ell=\log(f(\beta \log(x_{t-1})))$ is smaller than $2$ as long as $x_{t-1}<15$ and $\ell<3$ for $x_{t-1}<193$.

\begin{figure}
\centering
\includegraphics[width=0.7\linewidth]{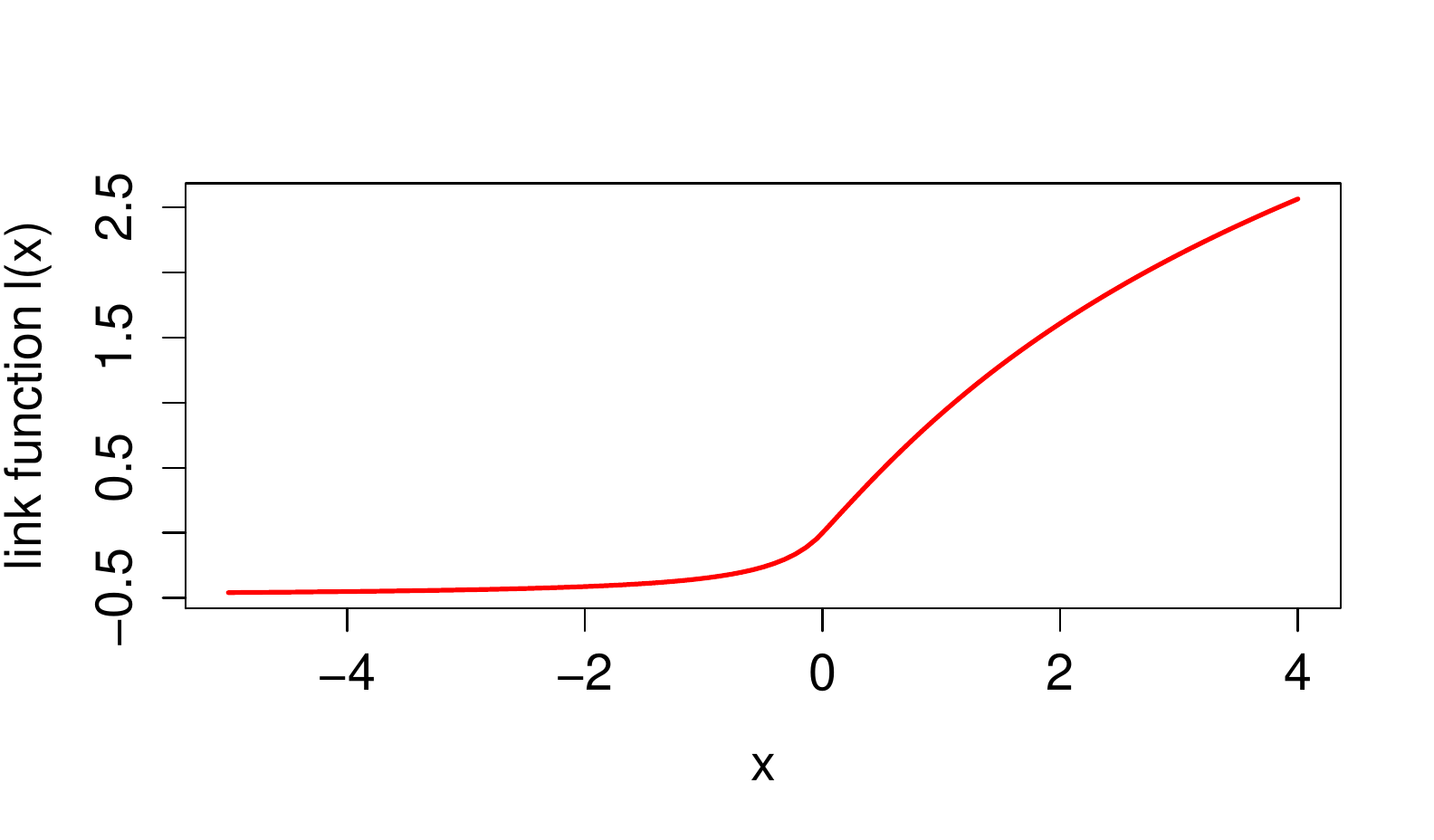}\vspace*{-3ex}
\caption{Link function for the shape of GEVD}
\label{Rplot}
\end{figure}

To show that our choice of link function for GEVD,
fulfills conditions~(ii) and (iii), first we calculate its derivative $\dot{\ell}=\dot{f}/f$ and obtain
$\dot{\ell}=(x+1)/(x^2/2+x+1)$ for $x>0$ and  $\dot{\ell}=2a_1(a_2-x)^{-1}(\log(a_2-x))^{-3}$ for $x<0$.
Hence, for large $x$, $\dot \ell$ behaves like $2/x$, while
for $x<0$, it essentially behaves like $-x^{-1}(\log(-x))^{-3}$.

As mentioned, the term $\Gamma(2x)$ dominates all entries of all terms of $\mathcal{I}^{\SSs\mathcal{Q}}_{\sigma, \xi}$
in \eqref{FIGEV}. Using
the Stirling approximation, i.e., $\Gamma(x)\approx \sqrt{2\pi}\,\exp(x(\log(x)-1/2))$,
due to the double application of the logarithm in the link function we get that $\Gamma(2\ell_{\xi}(\theta_\xi))$ is approximately  $\beta_\xi\log(x_\xi)$.
By equivariance in $\mu$ and $\sigma$, therefore the integral of condition~(ii) turns into:
%
$B_{1}(\xi):=
4\beta_\xi^{-1}\int\log(x)\,K(dx) <\infty$
for $\beta_\xi>0$
 and, for $\beta_\xi<0$, to
$B_{2}(\xi):
=\beta_\xi^{-1}\int \,\log(x) \big((\log(-\beta_\xi)+\log(\log(x))\big)^{-6}\,K(dx)$.
%
Finiteness of 
$B_{1}(\xi)$ and $B_{2}(\xi)$ follow from finiteness of $\Ew(\min\{1, (\log x)^k\})$ for $x\sim \textrm{GEVD}(0,1,\xi)$, $k\in \textbf{N}$.
%
%
Reconsidering $B_{1}(\xi)$, $B_{2}(\xi)$ at $\xi+s$, for $\left|s\right|<h$, $h<1$ we see that
$\sup_{\left|s\right|<h}B_{i}(\xi+s)<\infty$ for $i=1,2$, hence, condition~(iii) is a consequence
of dominated convergence and continuity of Fisher information $I_{\xi\xi}$ in $\xi$.

\section*{Acknowledgement}
This article is part of the PhD thesis of Daria Pupashenko.
All authors gratefully acknowledge financial support by the Volkswagen Foundation
for the project ``Robust Risk Estimation'', 
{\footnotesize \url{http://www.mathematik.uni-kl.de/~wwwfm/RobustRiskEstimation}}.
\ifx\ArXivS\undefined
\NewInRev{We also thank the associate editor for his helpful comments
and drawing our attention to applications with $\alpha$-stable error
distributions.}\fi
%
\ifx\ArXivS\undefined
\ifx\StProb\undefined
\section*{References}
\fi
\fi

%
%
\end{document}